\newcommand*{\B}[1]{\ifmmode\bm{#1}\else\textbf{#1}\fi}
\definecolor{mycolor}{RGB}{196,19,47}
\definecolor{mygray}{gray}{0.4}
\theoremstyle{remark}
\newtheorem{remark}[equation]{Remark}
\theoremstyle{definition}
\newtheorem{df}[equation]{Definition}
\newtheorem{cons}[equation]{Construction}
\newtheorem{obs}[equation]{Observation}
\newtheorem{Zusa}[equation]{Summary}
\newtheorem*{ass}{Assumption $(*)$}
\newtheorem{ctr}[equation]{Rule}
\newtheorem{ex}[equation]{Example}
\theoremstyle{plain}
\newtheorem{theorem}[equation]{Theorem}
\newtheorem{corollary}[equation]{Corollary}
\newtheorem{lemma}[equation]{Lemma}
\newtheorem*{thm_nn_a}{Theorem A}
\newtheorem*{l_b}{Lemma B}
\newtheorem*{thm_nn_c}{Theorem C}
\newtheorem*{ack}{Acknowledgements}
\providecommand*{\rightbotmark}{\expandafter\@rightmark\botmark\@empty\@empty}
\begin{document}
\pagestyle{plain}
\title{Open-Closed Trivialization of Circle Action for Calabi-Yau }
\begin{center}
    
\large{\textbf{Towards Open-Closed Categorical Enumerative Invariants: Circle-Action Formality Morphism}}
\end{center}
\hspace{0.5cm}
\begin{center}
    Jakob Ulmer
  \end{center}
\hspace{1cm}

\textbf{Abstract.} Categorical enumerative invariants of a Calabi-Yau category, encoded as the partition function of the associated closed string field theory (SFT), conjecturally equal Gromov-Witten invariants when applied to Fukaya categories. Part of this theory is a formality $L_\infty$-morphism which depends on a splitting of the non-commutative Hodge filtration.
Our main result is providing an open-closed formality morphism; the algebraic structures involved conjecturally give a home to {open-closed} GW invariants. We explain how the open-closed morphism is an ingredient towards quantizing the large $N$ open SFT of an object of a Calabi-Yau category. 
\tableofcontents
\section{Introduction}
Physicists introduced topological string theory, which `probes' a given Calabi-Yau manifold and allows to extract numerical invariants \cite{can91}.
A topological string theory comes in open, closed and open-closed versions.
For the closed topological string theory called the (closed) A-model those invariants have been developed under the name (closed) Gromov-Witten invariants and are well studied by now.

Kontsevich introduced a categorical version of Calabi-Yau spaces \cite{Kon94}, which had a profound influence in mathematics and in the field of mirror symmetry. It turns out that any open-closed topological string can be fully understood by a Calabi-Yau category \cite{Cos07a} \cite{Lu09}.
This raises the question if we can define an analogue of mentioned enumerative invariants for such categories so that they coincide with the (closed) Gromov-Witten invariants when applied to the Fukyay category of a symplectic manifold.
In section 1.1 we recall a theory which proposes an answer, developed by Costello \cite{Cos05}, Caldararu-Tu \cite{CaTu24}, following ideas of Kontsevich. Physically this is about passing to the closed string field theory (SFT) associated to the topological string theory. To be precise their procedure requires an additional ingredient, a splitting of the non-commutative Hodge filtration:
 A splitting of the Hodge filtration should roughly allow to extend the topological string theory to a natural compactification, inducing a cohomological field theory, compare \cite{des22}.
\begin{equation}\label{diagr}
\begin{tikzcd}
    \text{Calabi-Yau category}\arrow[bend left=25]{r}{\text{splitting of Hodge filtration}}&\text{`enumerative invariants'}
\end{tikzcd}\end{equation}
 
 In \cite{CaTu24} such a splitting is used to construct a formality morphism, which is a key step allowing them to extract numerical invariants, as we review below.

 Physicists also propose to extract invariants form open-closed topological string theories, which are mathematically more subtle to define, however. Nevertheless one can ask whether a categorical generalization exists, in the same vein as explained for the closed theory. In this note we provide an open-closed version of the formality morphism, again depending on a splitting of the non-commutative Hodge filtration (theorem A).
 To do so we introduce the algebraic structures aiming to provide a home to open-closed Gromov-Witten invariants. Physically, this is about considering the (large $N$) open-closed SFT associated to the Calabi-Yau category and a choice of objects. We explain how the open-closed formality morphism is helpful in finding a quantization of the (large $N$) open SFT (theorem C).
 
 Let us go through these ideas and results in more detail:
\subsection{Closed Categorical Enumerative Invariants}
Associated to a dimension $d$ Calabi Yau category $\mathcal{C}$ is a chain complex $\mathcal{F}^c(\mathcal{C})$ built\footnote{In fact we perform some shifts compared to what is shown here- see definition \ref{cBD}. We do so in order to be compatible with the open BD algebra \ref{oBD}. }  from its Hochschild chains 
$\big(CH_*(\mathcal{C}),d_{hoch}\big)$, which carries the structure of a $(2d-5)$ twisted Beilinson-Drinfeld algebra
\begin{equation}\label{cBDi}
\mathcal{F}^c(\mathcal{C}):=\Big(Sym\big(CH_*(\mathcal{C})[ u^{-1}]\big)\llbracket\gamma\rrbracket,d_{hoch}+uB+\gamma\Delta_c,\{\_,\_\}_c\Big),\end{equation} the 
\emph{(free) quantum observables of the closed string field theory associated to $\mathcal{C}$}.\footnote{See \cite{CL15} where this name is justified from the perspective of physics in the case of the category of coherent sheaves, called the B-model, which leads to Kodaira-Spencer gravity or BCOV theory, known as the string field theory of the B-model.} Its shifted Poisson structure $\{\_,\_\}_c$ and the free BV Laplacian $\Delta_c$ are built from the so called Mukai pairing $\langle\_,\_\rangle_M$ and the $S^1$ action on Hochschild chains, encoded by Connes operator $B$. Roughly speaking Kaledin proved \cite{kal17} that given a smooth and proper $\mathbb{Z}$-graded category the $S^1$-action on Hochschild chains is trivial.
A splitting of the non-commutative Hodge filtration specifies \emph{how} this action is trivial. 
Given such we can trivialize most of the algebraic structure of $\mathcal{F}(\mathcal{C})$.
Denote by $\mathcal{F}(\mathcal{C})^{Triv}$ the same underlying graded vector space, but where all operations except $d_{hoch}$ are zero.
\begin{theorem}[\cite{CaTu24}, section 7, \cite{AmTu25}]
Given a proper Calabi-Yau $A_\infty$-category $\mathcal{C}$ and a splitting~$s$ then there is an $L_\infty$ quasi-isomorphism of induced \footnote{by forgetting the multiplication and shifting appropriately} dg Lie algebras
\begin{equation}\label{cltr}
\Psi_s^c:\ \mathcal{F}^c(\mathcal{C})\rightsquigarrow {\mathcal{F}^c(\mathcal{C})}^{Triv}.\end{equation}
\end{theorem}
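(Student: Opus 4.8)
The plan is to push everything down to the level of the $S^1$-equivariant Hochschild complex. First observe that each piece of structure in \eqref{cBDi} beyond the Hochschild differential --- the term $uB$ in the total differential, the BV operator $\Delta_c$, and the shifted Poisson bracket $\{\_,\_\}_c$ --- is produced by one and the same functorial recipe out of the mixed complex $\big(CH_*(\mathcal{C}),d_{hoch},B\big)$ together with the Mukai pairing $\langle\_,\_\rangle_M$: one applies $Sym$, extends scalars to $k[u^{-1}]$ and then to $k\llbracket\gamma\rrbracket$, and reads off the BV--Poisson data from $B$ and from the residue pairing attached to $\langle\_,\_\rangle_M$. It therefore suffices to (i) extract from the splitting $s$ a chain-level trivialization of the circle action on $CH_*(\mathcal{C})$ that is compatible with the Mukai pairing, and (ii) show that the recipe above carries such a trivialization to an $L_\infty$ quasi-isomorphism of the induced dg Lie algebras, the trivialized input producing exactly $\mathcal{F}^c(\mathcal{C})^{Triv}$.

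For step (i): by Kaledin's theorem \cite{kal17} the non-commutative Hodge--de Rham spectral sequence of the smooth proper $\mathbb{Z}$-graded category $\mathcal{C}$ degenerates, so the negative cyclic homology $HC^-_*(\mathcal{C})$ is a free $k\llbracket u\rrbracket$-module reducing modulo $u$ to $HH_*(\mathcal{C})$; a splitting $s$ is precisely a choice of isomorphism $HC^-_*(\mathcal{C})\xrightarrow{\ \sim\ }HH_*(\mathcal{C})\llbracket u\rrbracket$ of filtered $k\llbracket u\rrbracket$-modules which is the identity modulo $u$. I would promote $s$ to the chain level by the standard order-by-order-in-$u$ argument: fix a contraction of $\big(CH_*(\mathcal{C}),d_{hoch}\big)$ onto a model of $HH_*(\mathcal{C})$, and use it together with $s$ to perturb $d_{hoch}+uB$ back to $d_{hoch}$, obtaining an explicit $k[u^{-1}]$-linear quasi-isomorphism (or short zig-zag of such) between the complex underlying \eqref{cBDi} and $\big(Sym(CH_*(\mathcal{C})[u^{-1}])\llbracket\gamma\rrbracket,d_{hoch}\big)$ which is ``$S^1$-trivialized'' in the sense that the induced $B$ and all of its higher descendants act as zero on the target. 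A second obstruction argument, using that $\langle\_,\_\rangle_M$ is non-degenerate and homotopy invariant, then adjusts this trivialization (at the cost of higher homotopies, hence an $L_\infty$ rather than a strict equivalence of ``pairing data'') so that it intertwines the residue pairings.

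For step (ii): I would invoke homotopy transfer. The contraction of $CH_*(\mathcal{C})[u^{-1}]$ from (i) induces, via the tensor trick through $Sym$ and $\llbracket\gamma\rrbracket$, a contraction of the complex underlying $\mathcal{F}^c(\mathcal{C})$ onto $Sym\big(HH_*(\mathcal{C})[u^{-1}]\big)\llbracket\gamma\rrbracket$; along it one transfers \emph{simultaneously} the perturbation $uB+\gamma\Delta_c$ of the differential and the bracket $\{\_,\_\}_c$, producing a transferred $L_\infty$ structure together with $L_\infty$ quasi-isomorphisms in both directions between $\mathcal{F}^c(\mathcal{C})$ and $Sym\big(HH_*(\mathcal{C})[u^{-1}]\big)\llbracket\gamma\rrbracket$. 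The $S^1$-trivialization and pairing-compatibility built into the contraction are exactly what force the transferred structure to be the abelian one (zero differential, zero brackets). Running the same transfer for $\mathcal{F}^c(\mathcal{C})^{Triv}$, whose bracket and $\Delta_c$ are already zero, yields $L_\infty$ quasi-isomorphisms onto that very same abelian dg Lie algebra; composing $\mathcal{F}^c(\mathcal{C})\rightsquigarrow Sym\big(HH_*(\mathcal{C})[u^{-1}]\big)\llbracket\gamma\rrbracket\rightsquigarrow\mathcal{F}^c(\mathcal{C})^{Triv}$ gives the desired $\Psi_s^c$.

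\textbf{Main obstacle.} The crux is the compatibility bookkeeping in (ii): one must run the homotopy transfer for the full perturbed differential $uB+\gamma\Delta_c$ and the bracket $\{\_,\_\}_c$ \emph{at once}, rather than killing $uB$, then $\Delta_c$, then $\{\_,\_\}_c$ in turn, since obstructions can reappear at higher order in $u^{-1}$ and in $\gamma$; showing that all of these obstructions are annihilated is precisely what forces one to use a splitting-induced contraction --- mere degeneration of the spectral sequence kills only the first obstruction --- together with the homotopy invariance and non-degeneracy of the Mukai pairing. A secondary nuisance is the sign and shift bookkeeping, namely the $(2d-5)$-twist and the shifts alluded to in the definition of $\mathcal{F}^c(\mathcal{C})$, and the translation between the negative-cyclic ($u$-adic) picture in which the splitting $s$ naturally lives and the $u^{-1}$-polynomial picture of \eqref{cBDi}.
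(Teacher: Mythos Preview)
This theorem is \emph{cited} from \cite{CaTu24} and \cite{AmTu25}; the present paper does not prove it but, in Section~4, recalls the explicit construction of the $L_\infty$ morphism $\mathcal{K}_s$: its Taylor coefficients $\mathcal{K}_s^m$ are defined as weighted sums over isomorphism classes of marked stable graphs $(G,f)\in\widetilde{\Gamma(g,n)}_m$, where each graph contributes via a Feynman-type contraction using the symmetric bilinear form $H^{sym}_s$ built from the splitting (Rule~\ref{Feynm}). The verification that these maps satisfy the $L_\infty$ relations is Theorem~7.1 of \cite{CaTu24}. One then composes with an explicit isomorphism $R:\mathcal{F}^c(\mathcal{C})^{triv}\to\mathcal{F}^c(\mathcal{C})^{Triv}$. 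This explicit graph-sum form is not incidental: it is the input to the paper's main technical Theorem~\ref{BVinf}.

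Your proposal takes a genuinely different and more abstract route, and it has a real gap. The central assertion --- that the $L_\infty$ structure obtained by homotopy transfer along a splitting-induced contraction is the \emph{abelian} one --- is exactly the content of the theorem, and you do not supply an argument for it beyond saying the splitting and pairing compatibility ``are exactly what force'' it. Concretely: the transferred binary bracket on $Sym(HH_*[u^{-1}])\llbracket\gamma\rrbracket$ begins with $p\circ\{\iota(-),\iota(-)\}_c$, and since $\{\_,\_\}_c$ is governed by the (non-degenerate) Mukai pairing there is no a priori reason for this to vanish; one needs the precise way $H^{sym}_s$ interacts with the residue pairing, which is what \cite{CaTu24} isolate and use directly. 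Moreover, the actual morphism $\mathcal{K}_s$ is a sum over stable graphs of \emph{all genera} (loops are essential, with the genus weighting by $\gamma$), whereas standard $L_\infty$ homotopy transfer produces only tree sums --- your framework does not account for the loop contributions. Finally, note that $\mathcal{F}^c(\mathcal{C})^{Triv}$ has the \emph{same} underlying graded vector space as $\mathcal{F}^c(\mathcal{C})$ (Definition~\ref{cBD_Tr}); the morphism of \cite{CaTu24} stays at chain level and does not pass through Hochschild homology, so your detour through $Sym(HH_*[u^{-1}])\llbracket\gamma\rrbracket$ is an extra step that would in any case require the missing vanishing argument.
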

Another fundamental ingredient is
\begin{theorem}[\cite{CaTu24, Cos05}, \cite{AmTu25}]\label{cGWP}
Given a smooth and proper Calabi-Yau $A_\infty$-category $\mathcal{C}$ there is a Maurer-Cartan element, determined under the TCFT action \cite{Cos07a} by the closed string vertices \cite{SZ94}, denoted
\begin{equation*}
I^{c,q}\in MCE(\mathcal{F}^c(\mathcal{C})).
\end{equation*}
 \end{theorem}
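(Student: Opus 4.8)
The plan is to produce $I^{c,q}$ by transporting the Sen--Zwiebach closed string vertices \cite{SZ94} along the chain-level action of moduli of Riemann surfaces on the Hochschild chains of the cyclic (Calabi--Yau) $A_\infty$-category $\mathcal{C}$. Smoothness and properness of $\mathcal{C}$, together with Kaledin's degeneration \cite{kal17}, let one work with the Deligne--Mumford compactification: one has an action, in the sense of Costello \cite{Cos07a, Cos05} (see also \cite{AmTu25}), of a chain model $C_*(\overline{\mathcal{M}})$ of the modular operad of stable punctured curves on the $u$-linearized Hochschild complex $CH_*(\mathcal{C})[u^{-1}]$, with operadic composition implementing gluing at punctures. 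First I would make precise that this action is a morphism of $(2d-5)$-twisted BD algebras: the symmetric algebra $Sym\big(C_*(\overline{\mathcal{M}})[u^{-1}]\big)\llbracket\gamma\rrbracket$ carries its own differential (topological boundary plus the $u$-de Rham piece recording rotation of the local coordinates), BV Laplacian (self-gluing of a single surface at two of its punctures), and shifted Poisson bracket (gluing of two surfaces), and the claim is that the action intertwines these with $d_{hoch}+uB+\gamma\Delta_c$ and $\{\_,\_\}_c$ from \eqref{cBDi} and definition \ref{cBD} — the latter two being built from the Mukai pairing, which is precisely the image under the action of the gluing pairing on punctures. This is where the Calabi--Yau structure enters, providing the nondegenerate pairing that makes gluing land in $CH_*(\mathcal{C})$.

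Next I would recall that the Sen--Zwiebach string vertices assemble into an element
\begin{equation*}
\mathcal{V}=\sum_{g,n}\gamma^{g}\,\mathcal{V}_{g,n}\in Sym\big(C_*(\overline{\mathcal{M}})[u^{-1}]\big)\llbracket\gamma\rrbracket,
\end{equation*}
with $\mathcal{V}_{g,n}$ the fundamental chain of the string-vertex region in $\overline{\mathcal{M}}_{g,n}$, and that by construction these satisfy the \emph{geometric} quantum BV master equation: the boundary of $\mathcal{V}_{g,n}$ equals the sum over ways of gluing lower vertices, i.e.
\begin{equation*}
(d+uB)\,\mathcal{V}+\gamma\,\Delta\mathcal{V}+\tfrac12\{\mathcal{V},\mathcal{V}\}=0
\end{equation*}
in the moduli-space BD algebra (the $uB$ term encoding the $S^1$-family of local coordinates, matching Connes' operator categorically). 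Applying the BD-algebra morphism $\Psi$ furnished by the TCFT action then yields $I^{c,q}:=\Psi(\mathcal{V})\in\mathcal{F}^c(\mathcal{C})$ satisfying
\begin{equation*}
(d_{hoch}+uB)\,I^{c,q}+\gamma\,\Delta_c I^{c,q}+\tfrac12\{I^{c,q},I^{c,q}\}_c=0,
\end{equation*}
which is exactly the Maurer--Cartan equation in the twisted BD algebra. The genuswise decomposition makes this sum converge in the $\gamma$-adic topology, since each $\mathcal{V}_{g,n}$ is a finite-dimensional chain and only finitely many $(g,n)$ contribute in a given $\gamma$-order and Hochschild degree. (If one prefers, it suffices to have an $L_\infty$-morphism of the underlying dg Lie algebras, since such morphisms preserve Maurer--Cartan loci.)

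The main obstacle I anticipate is not the formal transport but establishing, at the chain level and with all the shifts of definition \ref{cBD}, that the TCFT action is a strict (or at least $L_\infty$) morphism compatible with the \emph{whole} differential $d_{hoch}+uB+\gamma\Delta_c$: in particular that the non-commutative Hodge / $S^1$-equivariant structure on Hochschild chains matches the rotation-of-coordinates structure on moduli, and that the Mukai pairing is correctly identified, up to the $(2d-5)$ sign twist, with the Poincar\'e pairing used in gluing. This requires assembling Costello's partial-TCFT results \cite{Cos07a, Cos05}, the cyclic $A_\infty$ Deligne--Mumford extension available in the smooth and proper case \cite{kal17, AmTu25}, and a careful comparison of orientation and sign conventions; once that compatibility is in place, the existence of $I^{c,q}$ and the master equation follow from pulling back the Sen--Zwiebach solution.
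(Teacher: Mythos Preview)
The paper does not prove this theorem at all: it is stated with citations to \cite{CaTu24, Cos05, AmTu25} and then used as input. So there is no ``paper's own proof'' to compare against; the result is imported wholesale from the literature.

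Your sketch is a reasonable outline of how those references proceed, and you have correctly located the genuine work: the nontrivial content is not the formal push-forward of a Maurer--Cartan element along a BD (or $L_\infty$) morphism, but the construction of that morphism --- i.e.\ showing that the TCFT action of \cite{Cos07a} on Hochschild chains, extended to the Deligne--Mumford boundary in the smooth and proper case, intertwines the geometric gluing/self-gluing operations with $\{\_,\_\}_c$ and $\Delta_c$, and the $S^1$-rotation with Connes' $B$. That is precisely what \cite{Cos05}, \cite{CaTu24} and the forthcoming \cite{AmTu25} establish, and it is substantial; your proposal does not supply it but honestly flags it as the obstacle. One caveat: you phrase the TCFT action as a strict BD-algebra morphism from a moduli-space BD algebra to $\mathcal{F}^c(\mathcal{C})$, but in the cited references the construction is more indirect (an algebra over a certain dg-PROP, together with a separate argument that the string vertices satisfy the quantum master equation and that this transports). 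Packaging everything as a single BD morphism is morally right but would itself require justification beyond what you have written.
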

The (conjectural enumerative) invariants from diagram \ref{diagr} are encoded as the image of the Maurer-Cartan element $I^{c,q}$ under the map \eqref{cltr}.  More precisely under the isomorphism 
$$H_*({\mathcal{F}^c(\mathcal{C})}^{Triv})\cong Sym (HH_*(\mathcal{C})[u^{-1}])$$ we read off the coefficients of the determined homology class with respect to Hochschild homology-class and `psi-class' insertions, counted by the powers of $u$. 
\begin{remark}
    To give an actual formula for these coefficients is the main accomplishment of \cite{CaTu24}, see theorem 9.1. It seems however that in general one can do so only in an indirect way. To explicitly compute these numbers for special cases and in low genus is extremely hard, see section 9.2 of \cite{CaTu24} and \cite{catu17}.
\end{remark}
\subsection{Towards Open-Closed Categorical Enumerative Invariants}
Physicists and symplectic topologists (eg. \cite{psw08}) have been considering \emph{open} Gromov Witten invariants, which count maps from Riemann surfaces with boundary into a symplectic manifold such that the boundar(ies) lie in (a) fixed Lagrangian submanifold(s). Such invariants are subtle to define as the moduli space of such maps has an intricate boundary behavior \cite{Liu02}.
On the other hand, in \cite{EkSh25} the authors use exactly the structures induced from this boundary behavior to show that open Gromov Witten invariants have a natural home in the skein module.
Earlier Fukaya \cite{Fuk11} studied algebraic structures which similarly exhibit a home for such invariants in genus zero, a higher genus version was perhaps first described in \cite{mo99}; those structures also appear in string topology, eg. \cite{NaWi19}.

Following these insights, a way to assemble categorical open-closed invariants may be formulated as follows: Algebraically we now fix not just a category $\mathcal{C}$ as before \footnote{which we in fact now assume not just to be proper Calabi-Yau, but cyclic, which is a strictification of the first notion.} but also a set of objects $\Lambda$, which would correspond to Lagrangian submanifolds as objects of the Fukaya category. Then the natural open-closed analogue of \eqref{cBDi} is the tensor product BD algebra
$$\mathcal{F}^c(\mathcal{C})\otimes \mathcal{F}^o(\Lambda),$$
where $$\mathcal{F}^o(\Lambda)=\Big(Sym\big(Cyc^*(\Lambda)[-1]\big) \llbracket\gamma\rrbracket,d+\nabla+\gamma\delta,\{\_,\_\}_o\Big),$$ is the $(2d-5)$ twisted Beilinson-Drinfeld algebra induced from the involutive Lie bi-algebra structure on (a central extension of) cyclic cochains $Cyc^*(\Lambda)$. $\mathcal{F}^o(\Lambda)$ is further the domain of the quantized Loday-Quillen-Tsygan map \cite{GGHZ21, Ul25a}, which justifies calling $\mathcal{F}^o(\Lambda)$ \emph{the large $N$ (free) quantum observables of the open string field theory on a stack of $N$ branes of type~$\Lambda$}.\footnote{Compare \cite{CL15} where this is related to holomorphic Chern-Simons theory in the case of the B-model, by taking the `cyclic' $A_\infty$-algebra to be Dolbeault forms on $\mathbb{C}^3$.}

The main work in this note is about proving the following:
\begin{thm_nn_a}[Corollary \ref{maco}]
Given a dimension $d$ cyclic $A_\infty$-category $\mathcal{C}$, a set of objects $\Lambda$ and a splitting $s$ of the non-commutative Hodge filtration. Then there is an $L_\infty$ quasi-isomorphism
\begin{equation}\label{octr}
\Psi_s^{oc}:\ \mathcal{F}^{c}(\mathcal{C})\otimes \mathcal{F}^{o}(\Lambda)\rightsquigarrow \mathcal{F}^{c}(\mathcal{C})^{Triv}\otimes \mathcal{F}^{o}(\Lambda).
\end{equation}
\end{thm_nn_a}
\begin{remark}
The proof of this theorem reduces to proving an identity (theorem \ref{BVinf}) for the Taylor components of the purely closed $L_\infty$-morphism \eqref{cltr}. It would be interesting to have an operadic interpretation of this identity, perhaps in terms of $BV_\infty$-algebras \cite{GTV12}. 
\end{remark}
\begin{remark}
As is typical in this are of mathematical physics the morphism \eqref{octr} is defined in terms of graphs. To verify theorem A (respectively theorem \ref{BVinf}) we prove an interesting identity about the `moduli space of graphs', which is about relating graphs with varying number of vertices to each other, with the additional choice of a partition of the half-edges of a given vertex. This part may be of independent interest.
\end{remark}
We finish this introduction by explaining with  theorem C below how theorem A provides a building stone in quantizing the (large $N$) open SFT of a smooth cyclic $A_\infty$-category on a stack of $N$ branes of type $\Lambda$, for simplicity taking $\Lambda=\{\lambda\}$. Following theorem C we make some remarks about the relationship to the theory of open categorical invariants. 
\begin{ass}
Given a smooth cyclic $A_\infty$-category $\mathcal{C}$ and an object ${\lambda\in \mathcal{C}}$ there is an element $S^{oc}\in \mathcal{F}^c(\mathcal{C})\otimes \mathcal{F}^o(\lambda)$ such that\footnote{Here we denote $d_{oc}=d+d_{Hoch}+uB$, $\Delta_{oc}=\nabla+\gamma\delta+\Delta_c$ and anolougously for the bracket. Further $\nu\langle ch(\lambda),\_\rangle_{M}$ denotes pairing with the non-commutative Chern character $ch(\lambda)$, defined as the class of the unit of $End(\lambda)$ in Hochschild chains, via the Mukai pairing $\langle \_,\_\rangle_{M}$ (\cite{CaTu24}, section 3.4), extended as a derivation to the symmetric algebra, further weighted by $\nu$, the basis of cyclic cochains of length zero. $\nu$ gets mapped to the number $N$ under the quantized LQT map of rank $N$.}
\begin{equation}\label{Ass}
(d_{oc}+\Delta_{oc})(S^{oc})+\{S^{oc},S^{oc}\}_{oc}+\nu\langle ch(\lambda),S^{oc}\rangle_{M}=0,
\end{equation}
that it is a Maurer-Cartan element up to the last term, and the open tree level part\footnote{That is the projection to $Sym^0(\cdots)\otimes Sym^1(Cyc^*(\lambda)[-1])\gamma^0$.} of $S^{oc}$ is \begin{equation}\label{cycpot}
I:=\sum_{k=1}^\infty\langle m^k(\_,\cdots,\_),\_\rangle\in Cyc^*(\lambda)[-1],\end{equation} the cyclic potential\footnote{Compare eg. lemma 3.1.37 of \cite{Ul25a}.} of the cyclic $A_\infty$-algebra $End_\mathcal{C}(\lambda)$, the $m^k$ denoting its structure maps.

\end{ass}
Fundamental work of \cite{Zw98}, \cite{HVZ08} about the open-closed string vertices together with work in preparation \cite{AmTu25} should verify this assumption in general. The purely `open' part of the work \cite{AmTu25} (more precisely the open-closed analogue of the dotted arrow in \cite{CaTu24}) should reduce to the middle vertical arrow of theorem E in \cite{Ul25a}, related to Kontsevich's cocycle construction \cite{Kon92b}.  
\begin{l_b}
    Let $\mathcal{C}$ be a smooth cyclic $A_\infty$-category and ${\lambda\in \mathcal{C}}$ such that assumption $(*)$ holds and such that $ch(\lambda)=dh$. Let 
    $$I^{oc}(\lambda,h):=e^{\nu\langle h,-\rangle_{M}}S^{oc}.$$ Then we have that $$I^{oc}(\lambda,h)\in MCE\big(\mathcal{F}^c(\mathcal{C})\otimes \mathcal{F}^o(\lambda)\big).$$
\end{l_b}
\begin{proof}
This follows by computing
\begin{align*}
&d_{oc}\Big(e^{\nu\langle h,-\rangle_{M}}S^{oc}\Big)=e^{\nu\langle h,-\rangle_{M}}d_{oc}S^{oc}+e^{\nu\langle h,-\rangle_{M}}\nu\langle ch(\lambda),-\rangle_{M}S^{oc}\\
=&-\Big(\Delta_{oc}I^{oc}+\{I^{oc},I^{oc}\}_{oc}+\nu\langle ch(\lambda),e^{\nu\langle h,-\rangle_{M}}S^{oc}\rangle_{M}\Big)+\nu\langle ch(\lambda),-\rangle_{M}e^{\nu\langle h,-\rangle_{M}}S^{oc}\\
=&-\big(\Delta_{oc}I^{oc}+\{I^{oc},I^{oc}\}_{oc}\big),\end{align*}
where the first equality uses lemma 6.0.3 (3) of \cite{Cos05} and for the second equality we used assumption $(*)$ and that $e^{\nu\langle h,-\rangle_{M}}$ commutes with the brackets and BV differentials.\end{proof}
\begin{thm_nn_c}
   Given a smooth cyclic $A_\infty$-category $\mathcal{C}$ and ${\lambda\in \mathcal{C}}$ such that assumption $(*)$ holds, such that $ch(\lambda)=dh$ and given a splitting of the non commutative Hodge filtration $s$ then there exist a quantization $I^q(\lambda,h,s)\in MCE(\mathcal{F}^o(\lambda))$ of the cyclic $A_\infty$ algebra $End_\mathcal{C}(\lambda)$. 
\end{thm_nn_c}
\begin{proof}
Indeed, as the map $$pr:\mathcal{F}^{c}(\mathcal{C})^{Triv}\otimes \mathcal{F}^{o}(\Lambda)\rightarrow Sym^0(\cdot)\otimes \mathcal{F}^o(\lambda)\cong \mathcal{F}^o(\lambda)$$
is a map of dg Lie algebras it follows from the previous lemma B and theorem A that 
$$I^q(\lambda,h,s):=pr\circ\Psi_s^{oc}(I^{oc}(\lambda,h))\in MCE(\mathcal{F}^o(\lambda)).$$ Further by the second part of assumption $(*)$ and the explicit form of the map \ref{octr} one concludes that the dequantization\footnote{That is the projection to $Sym^1(Cyc^*(\lambda)[-1])\gamma^0$.} of $I^q(\lambda,h,s)$ is the cyclic $A_\infty$ potenial of $End_\mathcal{C}(\lambda)$.
\end{proof}
\begin{remark}
One could extract actual numbers from the closed GW analogue of the Maurer-Cartan element $I^q(\lambda,h,s)$, see beneath theorem \ref{cGWP}. We have not explored how to do that here. In open Gromov-Witten theory one should impose some extra conditions on the Lagrangians in order to do that: Eg. in \cite{Fuk11} it is required that the Lagrangian has the homology of a sphere and in \cite{psw08} that the Lagrangian is the locus of an involution. One could ponder similar constraints for the categorical set-up.    
\end{remark}
\begin{remark}
The `non-commutative' BV formalism \cite{Ba10b,CL15,GGHZ21, Ul25a} relates quantizations of Calabi-Yau categories to large $N$ matrix models, the zero dimensional cousins of large $N$ gauge theories. It is known that those describe enumerative invariants, eg. the Kontsevich matrix models \cite{Kon92a} for intersection numbers or its r-spin cousin \cite{BCEGF23}, large $N$ Chern-Simons theory \cite{GoVa99} for GW-invariants of the resolved conifold. We hope to relate these concrete large $N$ gauge theories to the general theory outlined here in future work.
\end{remark}
\begin{remark}
    In \cite{CL15}, page 3 it is argued that open string field theory in general and (large $N$) homolorphic Chern-Simons theory on $\mathbb{C}^3$ in particular does not admit a quantization by itself, in apparent\footnote{Note that we can't apply theorem~C as Dolbeault forms on $\mathbb{C}^3$ do not verify the properness assumption; compare also footnote 5. See however the exciting recent work \cite{ha25a,ha25b} and also \cite{CL15}, which may allow to loosen the properness restriction at the cost of introducing more analysis.} contrast to what we claim here. However note that the quantizations we produce in theorem C have `contributions' from the closed sector by stable graphs with zero leaves and thus do depend on both the open and closed sectors, which is actually similar in spirit to the main idea of \cite{CL15}.
\end{remark}
\begin{ack}
I would like to thank my advisors Owen Gwilliam and Grégory Ginot for support and feedback. I am grateful for the continuous interest of Surya Raghavendran and Philsang Yoo. I would like to thank Junwu Tu for discussions around ideas in this paper and the invitation to come to Shanghai in summer 2024. Lemma B and theorem C took their form during that visit and in joint conversations. My PhD is founded by the European Union’s Horizon 2020 research and innovation programme under the Marie Skłodowska-Curie grant agreement No~945332.    
\end{ack}
\section{Algebraic Preliminaries}
We fix $k$ a field of characteristic 0.
Furthermore we work in the category of chain complexes over $k$, denoted $Ch$, and stick to the homological convention.
The suspension $[\_]$ of a chain complex $V$ is defined in such a way that if $V$ is concentrated in degree zero, then V[1] is concentrated in degree $-1$.
The results of this paper also apply to the $\mathbb{Z}/{2}$-graded context.
\subsection{Beilinson-Drinfeld (BD) Algebras}
We have in mind that $r$ denotes an odd integer\footnote{Since then we understand the quantized LQT map.}, even though we don't need to make this restriction for this paper.
\begin{df}\label{BD}
An $r$-twisted Beilinson-Drinfeld ($BD$) algebra is a graded commutative unital algebra over the ring $k\llbracket \mu\rrbracket$ of formal power series in a variable $\mu$, which has cohomological degree $1-r$. It is endowed with a Poisson bracket $\{\_,\_\}$ of degree $r$, which is $R\llbracket \mu\rrbracket$-linear. Further it is endowed with an $R\llbracket\mu\rrbracket$-linear map $d$ of degree 1 that squares to zero and such that
\begin{equation}\label{BDrel}
d(a\cdot b)=d(a)\cdot b+(-1)^{|a|}a\cdot d(b)+\mu \{a,b\}.
\end{equation}
\end{df}
\begin{remark}
 In \cite{CPTTV17}, section 3.5.1, the notion of a $BD_d$ algebra is introduced. It is not clear to the author what is the relationship of this notion and ours. However, for $s=1$ an $s$-twisted BD algebra is the same as a $BD_0$ algebra.
 \end{remark}
\begin{lemma}\label{tensorBD}
    Given two $r$-twisted BD algebras $A$, $B$ the tensor product of their underlying graded vector spaces $A\otimes B$ can be equipped with the structure of an $r$-twisted BD algebra in a natural way, extending the initial ones.
\end{lemma}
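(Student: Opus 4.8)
The plan is to transport to the $\mu$-twisted setting the classical construction that makes the category of BV (equivalently, shifted Poisson) algebras symmetric monoidal. Throughout, $A\otimes B$ denotes the tensor product over $k\llbracket\mu\rrbracket$, completed $\mu$-adically where necessary, so that the two copies of $\mu$ coming from $A$ and from $B$ are identified and $A\otimes B$ is again a $k\llbracket\mu\rrbracket$-module; concretely, writing $A_0$, $B_0$ for the reductions at $\mu=0$, this is $(A_0\otimes_k B_0)\llbracket\mu\rrbracket$.

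First I would put on $A\otimes B$ the structures that require no choices. The graded commutative unital $k\llbracket\mu\rrbracket$-algebra product is
\[(a_1\otimes b_1)\cdot(a_2\otimes b_2)=(-1)^{|b_1|\,|a_2|}(a_1a_2)\otimes(b_1b_2),\qquad 1_{A\otimes B}:=1_A\otimes 1_B,\]
and the $k\llbracket\mu\rrbracket$-linear, degree-one differential is $d(a\otimes b)=d_A(a)\otimes b+(-1)^{|a|}a\otimes d_B(b)$; that $d^2=0$ is immediate from $d_A^2=d_B^2=0$ together with the Koszul sign.

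The only real choice is the bracket, and the defining relation \eqref{BDrel} essentially dictates it: one computes $d\big((a_1\otimes b_1)(a_2\otimes b_2)\big)$, substitutes the BD relations of $A$ and of $B$ into the terms $d_A(a_1a_2)$ and $d_B(b_1b_2)$, and checks that the terms involving only $d_A$, $d_B$ reassemble into $d(x_1)x_2+(-1)^{|x_1|}x_1\,d(x_2)$, while the remaining terms are precisely $\mu$ times an explicit expression of the shape
\[\{a_1\otimes b_1,\ a_2\otimes b_2\}:=\pm\,\{a_1,a_2\}_A\otimes b_1b_2\ \pm\ a_1a_2\otimes\{b_1,b_2\}_B,\]
with signs determined by a Koszul convention to be fixed at the outset. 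Taking this as the definition of the bracket, \eqref{BDrel} on $A\otimes B$ holds by construction; it is manifestly $k\llbracket\mu\rrbracket$-linear, and since $|\{a_i,a_j\}_A|=|a_i|+|a_j|+r$ both summands have degree $|x_1|+|x_2|+r$, so the bracket has degree $r$.

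It then remains to check that this bracket is a graded Poisson bracket, i.e. that it is graded antisymmetric, a biderivation for the product above, and satisfies the graded Jacobi identity. This is the familiar statement (in operadic terms, that the relevant $r$-twisted BD operad is Hopf, with the product grouplike and both $d$ and $\{\_,\_\}$ primitive for the coproduct), and I would verify it by the standard term-by-term bookkeeping: antisymmetry and the Leibniz rule follow summand by summand from the corresponding identities in $A$ and in $B$, and for Jacobi one expands a triple bracket into its summands and matches them against the Jacobi identities of $\{\_,\_\}_A$ and $\{\_,\_\}_B$, the mixed contributions cancelling in pairs. Finally, the inclusions $a\mapsto a\otimes 1_B$ and $b\mapsto 1_A\otimes b$ are morphisms of $r$-twisted BD algebras, which is the sense in which the construction extends the given ones. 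I expect the one real difficulty to be sign bookkeeping: a single Koszul convention must be chosen under which simultaneously the $\mu$-terms produced by $d(x_1x_2)$ equal $\mu$ times the same bracket that enters the Jacobi and Leibniz checks, and the mixed terms in graded Jacobi cancel. Nothing conceptual is at stake — this is the twisted analogue of ``a tensor product of BV algebras is a BV algebra'' — but because the degree-$r$ shift of the bracket and the degree $1-r$ of $\mu$ both feed into the signs, the conventions need to be pinned down carefully.
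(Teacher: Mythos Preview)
Your proposal is correct and follows the same standard route the paper has in mind: the paper's own proof simply cites \cite{legu14} (definition 2.7, proposition-definition 2.2) and \cite{man99} \S5.8.1 for exactly the argument you sketch, so you have effectively unpacked that citation. Note only that the paper records the explicit signs in the bracket formula \eqref{tenbra} just below the lemma, namely $(-1)^{|a_2|(|b_1|+r)}$ and $(-1)^{|b_1|(|a_2|+r)}$, which you left to be determined by the Koszul convention.
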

To define the shifted Poisson bracket on the tensor product we use both the respective algebra multiplications, denoted $m_A$ and $m_B$, and the original shifted Poisson brackets. Precisely we have that 
\begin{align}\label{tenbra}
\{a_1\otimes b_1,a_2\otimes b_2\}=(-1)^{|a_2|(|b_1|+r)}m_A(a_1,a_2)&\otimes\{b_1,b_2\}_B\\
&+(-1)^{|b_1|(|a_2|+r)}\{a_1,a_2\}_A\otimes m_B(b_1,b_2).\nonumber
\end{align}

\begin{proof}
This is the same argument described in definition 2.7 and proposition-definition 2.2 of \cite{legu14}, following \cite{man99}, section 5.8.1. Note however that we follow the convention of remark 2.3 (2) of \cite{legu14}.  
\end{proof}

\subsection{Calabi-Yau $A_\infty$-Categories}
An $A_\infty$-category is a homotopically better behaved generalization of the notion of a dg-category, further relevant since Fukaya categories of symplectic manifolds are \emph{not} dg categories, but do carry an $A_\infty$-category structure. We follow the conventions for $A_\infty$-categories from definition 3.1.2 of \cite{Ul25a}. 
An important invariant of $A_\infty$-categories is its Hochschild homology.  
\begin{df}
Let $\mathcal{C}$ be an $A_\infty$-category. Then denote by
$$CH_*(\mathcal{C}):=\mathcal{C}_\Delta\otimes^{\mathbb{L}}_{\mathcal{C}^e}\mathcal{C}_\Delta\in Ch,$$
 \emph{the Hochschild chains} of $\mathcal{C}$, that is the derived tensor product of the diagonal bimodule $\mathcal{C}_\Delta$ in the category of $\mathcal{C}$-bimodules (see eg. \cite{Ga19}, section 3.1, section 3.4 and references therein). Its homology is called \emph{Hochschild homology}.
\end{df}
One of its important properties is that there is an $S^1$-action on Hochschild chains (for algebras discovered by \cite{co85}, in our context eg. \cite{Ga23}, section 3.2). By considering the homotopy $S^1$-orbits of Hochschild chains we arrive at the notion of cyclic homology.
\begin{df}
   Given an $A_\infty$-category $\mathcal{C}$ we call the homology of $$CC_*(\mathcal{C}):=CH_*(\mathcal{C})_{hS^1}\in Ch,$$ the homology of the $S^1$-orbits of Hochschild chains, its \emph{cyclic homology}. 
\end{df}
Concretely we use the following chain complex to compute cyclic homology: $$CC_*(\mathcal{C})\simeq\big(CH_*(\mathcal{C})[ u^{-1}], d_{Hoch}+uB\big).$$
Here $CH_*(\mathcal{C})$ is the graded vector space on which the differential $d_{Hoch}$ computes Hochschild homology, $u^{-1}$ is a formal variable of degree $2$ and $B:CH_*(\mathcal{C})\rightarrow CH_*(\mathcal{C})$ is a chain map of degree 1 called Connes operator, so that $|uB|=-1$.
\\
\\
For the purposes of this paper we consider $A_\infty$-categories with an extra structure, which most generally may be phrased as requiring a proper Calabi-Yau structure. 
We work with the convenient explicit notion of a cyclic structure, which is a  strictification of proper Calabi-Yau and leave it to further investigations how to understand our results at this generality. 
Compare however \cite{choSa12} \cite{BrDy21}   for relevant results in this direction and \cite{AmTu22} for an answer to such investigations in the purely `closed' setting.

We describe the relevant notions to provide some context: We recall that an $A_\infty$-category $\mathcal{C}$ is called \emph{proper} if $Hom_\mathcal{C}(x,y)\in Ch$ is perfect for all $x,y\in Ob(\mathcal{C}).$ 
\begin{df}
Given a proper $A_\infty$-category $\mathcal{C}$ a \emph{weak proper Calabi-Yau structure} of dimension $d$ is a quasi-isomorphism of $\mathcal{C}$-bimodules between the diagonal bimodule $\mathcal{C}_\Delta$ and the linear dual diagonal bimodule $\mathcal{C}_\Delta^\vee$ (see \cite{Ga19}, section 3.1, references therein and section 3.4)
$$\mathcal{C}_\Delta \simeq\mathcal{C}_\Delta^\vee[-d].$$

A weak proper Calabi-Yau structure of dimension $d$, also called weak right Calabi-Yau structure, may be equivalently described by saying that there is a map of complexes $$tr: CH_*(\mathcal{C})\rightarrow k[-d]$$ such that the induced map 
$$\begin{tikzcd}
    Hom_\mathcal{C}(x,y)\otimes Hom_\mathcal{C}(y,x)\arrow{r}{m^2_{x,y}}& Hom_\mathcal{C}(x,x)\arrow{r}{\iota_x}& CH_*(\mathcal{C})\arrow{r}{tr}& k[-d]\end{tikzcd}$$
is non-degenerate on homology, for all objects $x,y\in \mathcal{C}$ (see remark 49 of \cite{Ga23}). If the map $tr$ is induced from a map from cyclic chains along the natural map
$CH_*(\mathcal{C})\rightarrow CC_*(\mathcal{C})$ this is called a \emph{strong proper Calabi-Yau structure}.
\end{df}
\begin{df}
    A \emph{cyclic $A_\infty$-category} of degree $d$, denoted $(\mathcal{C},\langle\_,\_\rangle_\mathcal{C})$, is an $A_\infty$-category $\mathcal{C}$ together with a  symmetric non-degenerate chain map $$\langle\_,\_\rangle_{\mu\lambda}:\  Hom_\mathcal{C}(\mu,\lambda)\otimes Hom_\mathcal{C}(\lambda,\mu)\rightarrow k[-d]$$
    for all $\mu,\lambda\in Ob(\mathcal{C})$ which satisfies a certain compatibility with the $A_\infty$-compositions, see eg. equation 3.1.29 of \cite{Ul25a}.
    \end{df}
Any cyclic $A_\infty$-category is canonically a strong proper Calabi-Yau category and thus a weak one, see around corollary 2.26 of \cite{AmTu22}. A weak proper Calabi-Yau category together with a splitting of the non-commutative Hodge filtration (a central ingredient in this paper, which we introduce later in definition \ref{splitting}) naturally induce a strong proper Calabi-Yau structure, see lemma 3.16 of \cite{AmTu22}. Lastly in characteristic zero one can strictify a strong proper Calabi-Yau category to a cyclic one, see \cite{KoSo24} theorem 10.7. 

Thus we may pretend that all these notions are roughly equivalent for our purposes. However, the notion of cyclic $A_\infty$-category is not preserved under general $A_\infty$-functors (whereas the one of proper Calabi-Yau categories is).

In fact part of our objects depend only on a small part of the datum of a cyclic $A_\infty$-category.
 \begin{df}\label{ColV}
    We say $$V_B:=(V_{ij},\langle\_,\_\rangle_{ij})_{i,j\in B}$$ is a \emph{collection of cyclic chain complexes of dimension $d$} if it is a cyclic $A_\infty$-category $\mathcal{C}$ of dimension $d$ with zero (higher) compositions and $B=Ob(\mathcal{C})$. 
\end{df}
That is, in particular any cyclic $A_\infty$-category gives rise to a collection of cyclic chain complexes by forgetting the (higher) compositions.

\subsection{Splitting of non-commutative Hodge filtration}
\begin{df}\label{splitting}
    Let $\mathcal{C}$ be an $A_\infty$-category. A \emph{splitting of the non-commutative Hodge filtration} is a chain map
$$s:(CH_*(\mathcal{C}),d_{Hoch})\rightarrow (CH_*(\mathcal{C})\llbracket u\rrbracket,d_{Hoch}+uB)$$
splitting the canonical projection 
$$\pi:(CH_*(\mathcal{C})\llbracket u\rrbracket),d_{Hoch}+uB)\rightarrow(CH_*(\mathcal{C}),d_{Hoch}),$$
that we require $s\circ\pi=id$.
\end{df}
\begin{remark}
Let $\mathcal{C}$ be a $\mathbb{Z}$-graded smooth and proper $A_\infty$-category. By \cite{kal17} the non-commutative Hodge to de-Rham spectral sequence degenerates and converges to negative cyclic homology, as explained eg. in the proof of theorem 5.45 of \cite{She19}. Because we are working over a field of characteristic zero this implies that there is a (non-canonical) identification of negative cyclic homology
\begin{equation}\label{noncan}
HC_*^-(\mathcal{C})\cong HH_*(\mathcal{C})\llbracket u\rrbracket.\end{equation}
\end{remark}
A choice of the splitting of the non-commutative Hodge filtration provided in particular the choice of such an isomorphism. Indeed, extending a splitting $s$ $u$-linearly gives a map 
$$(CH_*(\mathcal{C})\llbracket u\rrbracket,d_{Hoch}) \rightarrow (CH_*(\mathcal{C})\llbracket u\rrbracket,d_{Hoch}+uB),$$
which we can think of as a formal power series of endomorphisms of Hochschild chains whose first term is invertible (in fact it is the identity, by the splitting condition), thus the whole formal power series is invertible. Thus this map induces an isomorphism \eqref{noncan}.
\subsection{$L_\infty$-Algebras}
We recall the following notions, basically citing the succinct presentation of \cite{wi11}:

Let $V$ be a graded vector space. We denote the symmetric algebra by $Sym V=\bigoplus_{n\geq 0} V^{\otimes n} / I$, where $I$ is the two-sided ideal generated by relations $x\otimes y -(-1)^{|x||y|}y\otimes x$. The product will be denoted by $\odot$. For example, the expressions $x_1\odot \cdots \odot x_n :=[x_1\otimes \cdots \otimes x_n]$ generate $Sym^nV$ as a vector space. Let $Sym^+ V:= \bigoplus_{n\geq 1} Sym^n V$, with grading given by $|x_1\odot \cdots \odot x_n|=\sum_j|x_j|$. This space carries the structure of a graded cocommutative coalgebra without counit, with comultiplication given by
\begin{equation}\label{coal_def}
\Delta( x_1 \odot \cdots \odot x_n) = \sum_{\substack{I\sqcup J=[n] \\ |I|,|J|\geq 1}} \epsilon(I,J) \bigodot_{i\in I} x_i \otimes \bigodot_{j\in J} x_j.
\end{equation}
Here $\epsilon(I,J)$ is the sign of the ``shuffle'' permutation bringing the elements of $I$ and $J$ corresponding to \emph{odd} $x$'s into increasing order. Note that $\epsilon(I,J)$ implicitly depends on the degrees of the $x$'s.

\begin{df}
Let $(\mathcal{C},\Delta)$, $(\mathcal{C}',\Delta')$ be graded coalgebras. A linear map $\mathcal{F}\in Hom_k(\mathcal{C},\mathcal{C}')$ is called \emph{degree $k$ morphism of coalgebras} if $\Delta' \circ \mathcal{F} = (\mathcal{F}\otimes \mathcal{F})\circ \Delta$. A linear map $Q\in Hom_k(\mathcal{C},\mathcal{C})$ is called a \emph{degree $k$ coderivation} on $\mathcal{C}$ if  $\Delta \circ Q = (Q\otimes 1 + 1\otimes Q)\circ \Delta$. Here we use the Koszul sign rule, e.g., $(1\otimes Q)(x\otimes y) = (-1)^{k|x|}x\otimes Qy$ etc.
\end{df}

\begin{remark}\label{nütz}
Any coderivation $Q$ on $Sym^+V$ (coalgebra morphism $\mathcal{F}:Sym^+V\to Sym^+W$) is uniquely determined by its composition with the projection $Sym^+V\to Sym^1V=V$, respectively (${Sym^+W\to Sym^1W=W}$). The restriction to $Sym^nV$ of this composition will be denoted by ${Q_n\in Hom(Sym^kV,V)}$ ($\mathcal{F}_n\in Hom(Sym^kV,W)$) and called the $n$-th ``Taylor coefficient'' of $Q$ ($\mathcal{F}$).
\end{remark}
\begin{df}\label{coalgebra}
An $L_\infty$-algebra structure on a graded vector space $\mathfrak{g}^\bullet$ is a degree 1 coderivation $Q$ on $Sym^+(\mathfrak{g}[1])$ such that $Q^2=0$. A morphism of $L_\infty$ algebras $${F}:(\mathfrak{g},Q)\rightsquigarrow (\mathfrak{g}',Q')$$ is a degree 0 coalgebra morphism $C({F}):S^+(\mathfrak{g}[1])\to Sym^+((\mathfrak{g}')[1])$ such that $C({F})\circ Q=Q'\circ C({F})$.
\end{df}

In components, the $L_\infty$-relations read
\[
\sum_{\substack{I\sqcup J=[n] \\ |I|,|J|\geq 1} } \epsilon(I,J) Q_{|J|+1}(Q_{|I|}(\bigodot_{i\in I} x_i) \odot \bigodot_{j\in J} x_j) = 0.
\]
\begin{ex}\label{dgla}
Let $(\mathfrak{g},d,\{\_,\_\})$ be a differential graded Lie algebra. Then the assignments $Q_1(x)=d x$, $Q_2(x,y)=(-1)^{|x|}\{x,y\}$, $Q_n=0$ for $n=3,4,..$ define an $L_\infty$-algebra structure on $\mathfrak{g}$. 
Here and everywhere in the paper $|x|$ is the degree wrt. the grading on the coalgebra.
\end{ex}
\begin{ex}\label{wmg}
An $L_\infty$-morphism $\mathcal{F}$ between dglas $\mathfrak{g}$, $\mathfrak{g}'$ has to satisfy the relations
\begin{multline}
\label{equ:dglaLinfty}
Q_1'\mathcal{F}_n(x_1\odot \cdots \odot x_n) + \frac{1}{2} \sum_{\substack{I\sqcup J=[n] \\ |I|,|J|\geq 1} }\epsilon(I,J)
Q_2'(\mathcal{F}_{|I|}(\bigodot_{i\in I} x_i) \odot \mathcal{F}_{|J|}( \bigodot_{j\in J} x_j) )
= \\ =
\sum_{i=1}^n \epsilon(i,1,\dots,\hat{i},\dots, n) \mathcal{F}_n( Q_1(x_i)\odot x_1\odot \cdots\odot\hat{x}_i \odot \cdots \odot x_n)
+ \\ +
\frac{1}{2}
\sum_{i\neq j}^n \epsilon (i,j,\dots,\hat{i},\dots,\hat{j},\dots, n) \mathcal{F}_{n-1}( Q_2(x_i\odot x_j)\odot x_1\odot\cdots\odot \hat{x}_i \odot \cdots \odot\hat{x}_j \odot \cdots x_n).
\end{multline}
Here the factor $\epsilon(i,j,1,..,\hat{i},..,\hat{j},.., n)$ is the sign of the permutation on the \emph{odd} $x$'s that brings $x_i$ and $x_j$ to the left and analogously for $\epsilon(i,1,\dots,\hat{i},\dots, n)$.

\end{ex}

\section{String Field Theory BD Algebras}
\begin{df}\label{cBD}
Let $\mathcal{C}$ be a cyclic $A_\infty$-category of degree $d$. The \emph{(free) quantum observables of the closed string field theory associated to $\mathcal{C}$} 
$$\mathcal{F}^{c}(\mathcal{C}):=\Big(Sym\Big(CH_*(\mathcal{C})[u^{-1}][d-2]\Big)^{-*}\llbracket\gamma\rrbracket,d_{Hoch}+uB+\gamma\Delta_c,\{\_,\_\}_c\Big),$$
 is the $(2d-5)$-twisted Beilinson-Drinfeld over $k\llbracket\gamma\rrbracket$, that is $d_{Hoch}+uB+\gamma\Delta_c$ is a differential and $\{\_,\_\}_c$ is a $(2d-5)$ shifted Poisson bracket, which together with the natural multiplication satisfy the BD relation \ref{BDrel}. More precisely the algebraic operations are defined exactly as in \cite{AmTu22}, page 35, that is  $\mathcal{F}^{c}(\mathcal{C}):=\mathfrak{h}_\mathcal{C}$, just that we carry the shifts\footnote{Further the notation $-*$ means that we reverse the grading of what is inside the brackets, which is why eg. $|d_{Hoch}|=1.$} with us (and we further ignore the variable $\lambda$ from there). See also section 4.1 of \cite{CaTu24} for a more detailed description of the algebraic operations in the case of a category with one object. 
\end{df}

\begin{remark}
This algebraic structure was originally described in \cite{Cos05}, but with slightly different grading conventions. Here we chose to follow \cite{CaTu24}, but had to perform yet different shifts to be compatible with the open sector.
\end{remark}
\begin{remark}
    In \cite{CL15} the authors describe how for $\mathcal{C}$ the derived category of coherent sheaves on $\mathbb{C}^3$ the BD algebra $\mathcal{F}^{c}(\mathcal{C})$ describes the free quantum observables of BCOV theory, that is the closed string field theory of the B-model.
\end{remark}
\begin{df}\label{cBD_t}
   Let $\mathcal{C}$ be a cyclic $A_\infty$-category of degree $d$. We denote by $\mathcal{F}^{c}(\mathcal{C})^{triv}$ \emph{the $(2d-5)$-twisted BD algebra which has the same underlying graded algebra as $\mathcal{F}^{c}(\mathcal{C}),$ but with zero bracket and whose differential is just induced from the derivation $d_{Hoch}+uB$}.
\end{df}
\begin{df}\label{cBD_Tr}
   Let $\mathcal{C}$ be a cyclic $A_\infty$-category of degree $d$. We denote by $\mathcal{F}^{c}(\mathcal{C})^{Triv}$ \emph{the $(2d-5)$-twisted BD algebra which has the same underlying graded algebra as $\mathcal{F}^{c}(\mathcal{C}),$ but with zero bracket and whose differential is just induced from the derivation $d_{Hoch}$}.
\end{df}

\begin{df}\label{oBD}
Let $V_B$ be a collection of cyclic chain complexes of degree $d$, recalling definition \ref{ColV}. Then we denote the \emph{large $N$ observables of open string field theory of a stack of $N$ branes of type $\Lambda$} by $$\mathcal{F}^o(V_B):=\Big(Sym\big(Cyc^*(V_B)[d-4]\big) \llbracket\gamma\rrbracket,d+\nabla+\gamma\delta,\{\_,\_\}_o\Big).$$ 
It has a $(2d-5)$-twisted Beilinson-Drinfeld algebra structure over $k\llbracket\gamma\rrbracket$, a formal variable of degree $(6-2d)$. This follows by identifying $\mathcal{F}^o(V_B)=\mathcal{F}^{pq}(V_B)[d-3],$ see around theorem 3.1.35 of \cite{Ul25a} for notations and details on the algebraic structures. Roughly its underlying graded vector space is  built as the symmetric algebra on composable cyclic words (including the empty one) in the dual of the Hom spaces of $V_B$, and all adjoined $\gamma$, a formal variable of degree $(6-2d)$. 

If $V_B$ is induced from a full subcategory with objects $\Lambda$ of a given cyclic $A_\infty$-category $\mathcal{C}$, recalling remarks around \ref{ColV}, we denote $\mathcal{F}^o(V_B)=:\mathcal{F}^o(\Lambda).$
\end{df}
\begin{remark}
    As written we shifted the initial $(d-2)$-twisted BD algebra from \cite{Ul25a}, theorem 3.1.35 by $(d-3)$. We made that choice there to be more compatible with the target of the LQT map, which naturally has a $(d-2)$-shifted BD algebra structure, whereas here it seems more natural not to perform that shift. We hope no confusion arises.  
\end{remark}
\begin{remark}
    The BD structure is induced from the well-studied (notably in string topology) involutive bi Lie algebra structure on cyclic cochains \cite{NaWi19, CiFuLa20, Ca16}. Note that $\mathcal{F}^o(\Lambda)$ is also the doamin of the quantized LQT theorem \cite{GGHZ21, Ul25a}, which for the derived category of coherent sheaves on $\mathbb{C}^3$ exhibits the BD algebra $\mathcal{F}^o(\Lambda)$ as the free quantum observables of large $N$ holomorphic Chern-Simons theory, which is the open string field theory of the B-model on a stack of $N$ space-filling branes \cite{CL15}.
\end{remark}

\section{Recollection on Circle Action Trivialization of Closed SFT}
Note that by forgetting the multiplication a $(2d-5)$-twisted BD algebra becomes a dg $(2d-5)$-shifted Lie algebra, which we denote by the same letter by abuse of notation. We obtain dg Lie algebras by shifting by $-(2d-5)$ a dg $(2d-5)$ shifted Lie algebra.

In this section we recall from \cite{CaTu24} the construction of an $L_\infty$-quasi-isomorphism 
\begin{equation}\label{inLqi}
\mathcal{K}_s:\ \mathcal{F}^{c}(\mathcal{C})[5-2d]\rightsquigarrow\mathcal{F}^{c}(\mathcal{C})^{triv}[5-2d],\end{equation}
given a splitting $s$ of the non-comutative Hodge filtration (definition \ref{splitting}). Note, however, that we have introduced a different grading; we carefully keep track of the $\mathbb{Z}$-grading.  The Taylor coefficients of $\mathcal{K}_s$ (compare remark \ref{nütz}) are given by maps
\begin{equation}\label{tayf}
\mathcal{K}_s^m:Sym^m(\mathcal{F}^{c}(\mathcal{C})[6-2d])\rightarrow \mathcal{F}^{c}(\mathcal{C})^{triv}[6-2d],\end{equation}
which uniquely define the morphism \eqref{inLqi}. Since this morphism constitutes one of the main ingredient of this note, we recall its definition here, after \cite{CaTu24}, whose notation we follow. First we recall some
notions of graphs, introduced in \cite{geKa96}. A labeled
graph means for us a graph $G$ (possibly with leaves) endowed with a
`loop defect' labeling function $g: V_G\rightarrow \mathbb{Z}_{\geq 0}$ on the set of its
vertices $V_G$. 
We use the following notations for a labeled graph $G$:
\begin{itemize}
\item[--] $E_G$ denotes the set of edges, the cardinality of that set denoted $|e|$;
\item[--] $h_v$ denotes the set of halfedges belonging to a vertex $v$;
\item[--] The valency of a vertex $v\in V_G$ is denoted by $|h_v|$, the cardinality of halfedges belonging to that vertex;
\item[--] the number of vertices of a graph is denoted by $|v|$.
\end{itemize}
The betti number of a labeled graph is defined to be
\begin{equation}\label{betti}
 g(G):= \sum_{v\in V_G} g(v)+{rank\,} H_1(G), \end{equation}
and we recall that for connected graph we have \begin{equation}
\label{rank}{rank\,} H_1(G)=|e|-|v|+1.\end{equation}
\begin{itemize}
\item[--] If $G$ has $m$ vertices, a marking of $G$ is a bijection
  \[ f: \{1,\cdots,m\} \rightarrow V_G. \] 
 \item[--] An isomorphism between two marked
and labeled graphs is an isomorphism of the underlying labeled graphs
that also preserves the markings.
\end{itemize}

We denote by $\widetilde{\Gamma(g,n)}_m$  the set of isomorphism classes of connected {\em marked}
graphs of betti number $g$ with $n$ leaves and with $m$ vertices. 

Next we recall that 
$$\mathcal{F}^{c}(\mathcal{C})[6-2d]=Sym\Big(CH_*(\mathcal{C})[u^{-1}][d-2]\Big)^{-*}\llbracket\gamma\rrbracket[6-2d]$$
and fix the notation $$\mathcal{H}:=(CH_*(\mathcal{C})[u^{-1}][d-2])^{-*}$$ so that 
$$\mathcal{F}^{c}(\mathcal{C})[6-2d]=Sym(\mathcal{H})\llbracket\gamma\rrbracket[6-2d].$$
Further for an element $y=:x_1x_2\cdots x_j \in Sym^{j}(\mathcal{H})$ let
$\widetilde{y} \in \mathcal{H}^{\otimes j}$ be its
desymmetrization,
\begin{equation}\label{desym}
 \widetilde{y}:=\sum_{\sigma\in \Sigma_j} \epsilon \cdot
x_{\sigma(1)}\otimes\cdots\otimes x_{\sigma(j)} \in \mathcal{H}^{\otimes j}.
\end{equation}
Here $\epsilon$ is the Koszul sign for permuting the elements
$x_1,\ldots, x_i$ with respect to the degree in $\mathcal{H}$. 
Given $n>0,g\geq0$, we denote by 
$$(\_)|_{g,n}:\mathcal{F}^{c}(\mathcal{C})[6-2d]\rightarrow (\mathcal{F}^{c}(\mathcal{C})[6-2d])|_{g,n}$$
the projection on the sub vector space given by symmetric words of length $n$ and of $\gamma$-coefficient~$g$.

Given a splitting $s$ from definition \ref{splitting} we can construct a symmetric bilinear form
  \begin{equation}\label{Hsym}
H^{sym}_s: \mathcal{H}^{\otimes 2} \rightarrow k[6-2d], \end{equation}  see page 38 of \cite{AmTu22} or in more detail proposition 7.5 of \cite{CaTu24} for the definition - carried over to our shifting and grading convention. 

Using these notations we can recall the definition of the degree zero linear maps \eqref{tayf}
$$\mathcal{K}_s^m: Sym(\mathcal{F}^{c}(\mathcal{C})[6-2d])\rightarrow \mathcal{F}^{c}(\mathcal{C})^{triv}[6-2d],$$
defined for each $m\geq 1$. We have
\[ \mathcal{K}_s^m := \sum_{g,n} \sum_{ (G,f)\in \widetilde{\Gamma(g,n)}_m}
  \frac{1}{|Aut(G,f)|}\cdot K_{(G,f)}, \]
and we describe the map $K_{(G,f)}$: Let $(G,f)\in \widetilde{\Gamma(g,n)}_m$ be a marked graph. Set
$g_i = g(f(i))$, and $n_i = |h_{f(i)}|$.  The map $K_{(G,f)}$ has only non-trivial components
\begin{equation}\label{betco}
\bigotimes_{i=1}^m  \mathcal{F}^{c}(\mathcal{C})[6-2d]|_{g_i,n_i}  \rightarrow \mathcal{F}^{c}(\mathcal{C})[6-2d]|_{g,n},
\end{equation}
which leaves us to specify for $y_i\in Sym^{n_i}(\mathcal{H})[6-2d]\ (i=1,\ldots,m)$ the expression $$K_{(G,f)} (y_1\gamma^{g_1},\ldots, y_m\gamma^{g_m})=Z\gamma^g.$$ 
\begin{ctr}\label{Feynm}
Here the element
$Z$ is computed by the following Feynman-type procedure. 
\begin{enumerate}
\item Given $y_i\in Sym^{n_i}(\mathcal{H})[6-2d]$ for $i=1,\ldots,m$ 
decorate the half-edges adjacent to each vertex $v_i$ by
$\widetilde{y}_i$, recalling the desymmetrization operation \eqref{desym}. Tensoring together the results over all the
vertices yields a tensor of the form
\begin{equation}\label{abt}
 \bigotimes_{i=1}^m \widetilde{\gamma_{i}}\in \mathcal{H}^{\otimes (\sum
    n_i)}[|v(G)|(6-2d)].  \end{equation}
The order in which these elements are tensored is the one
given by the marking $f$.

\item  For each internal edge $e$ of $G$ contract the corresponding
components of the above tensor \eqref{abt} using the symmetric bilinear form $H^{sym}_s$, built from the splitting $s$. When applying the
contraction we always permute the tensors to bring the two terms
corresponding to the two half-edges to the front, and then apply the
contraction map. The ordering of the set $E_G$ does not matter since
the operator $H^{sym}_s$ is even; also the ordering of the two
half-edges of each edge does not matter since the operator $H^{sym}_s$
is (graded) symmetric.
\item Read off the remaining tensor components (corresponding to the
leaves of the graph) in any order, and regard the result as the
element $$Z\in Sym^n (\mathcal{H})[6-2d]$$ via the canonical projection map
$\mathcal{H}^{\otimes n}\rightarrow Sym^n \mathcal{H}$, followed by a shift by $(|v|-1)(6-2d)$. 
\end{enumerate}
Tracking the degrees it follows that $|Z|=\sum_i|y_i|+e(2d-6)+(1-v)(2d-6).$ This shows that $|K_{(G,f)}|=0$, since we have
$$|Z\gamma^g|=|Z|+g(6-2d)=(g-2d)\sum_ig_i+\sum_i|y_i|,$$
recalling the formula \eqref{betti} for the betti number $g$ of a graph.

\end{ctr}

\section{Circle Action Trivialization of Open-Closed SFT}
In this section we prove the main theorem A. Let us fix a dimension $d$ cyclic $A_\infty$-category $\mathcal{C}$, a splitting $s$ of the non-commutative Hodge filtration (see definition \ref{splitting}) and an auxiliary $(2d-5)$-twisted BD algebra $W$ (which we will later take to be $\mathcal{F}^{c}(\Lambda)$).

Recall (from lemma \ref{tensorBD}) that we can consider the tensor product $(2d-5)$-twisted BD algebras $\mathcal{F}^{c}(\mathcal{C})\otimes W$ and $\mathcal{F}^{c}(\mathcal{C})^{triv}\otimes W$, again recalling definitions \ref{cBD} and \ref{cBD_t}.
By forgetting their multiplications and shifting we find Lie algebras $\mathcal{F}^{c}(\mathcal{C})\otimes W[5-2d]$ and  $\mathcal{F}^{c}(\mathcal{C})^{triv}\otimes W[5-2d]$.
The goal of this section is to construct an $L_\infty$ quasi-isomorphism denoted 
\begin{equation}\label{L_inf}
\mathcal{K}_s\otimes m:\ \mathcal{F}^{c}(\mathcal{C})\otimes W[5-2d]\rightsquigarrow \mathcal{F}^{c}(\mathcal{C})^{triv}\otimes W[5-2d].\end{equation}

Equivalently (see definition \ref{coalgebra}) such a map \eqref{L_inf} can be described as a map of dg coalgebras 
\begin{equation}\label{CoALG}
C(\mathcal{K}_s\otimes m):\ Sym\big(\mathcal{F}^{c}(\mathcal{C})\otimes W[6-2d],d_{oc}\big)\rightarrow Sym\big(\mathcal{F}^{c}(\mathcal{C})^{triv}\otimes W[6-2d],d_{oc}^t\big),\end{equation}
where the left and right hand side have the standard coalgebra structure (compare definition \ref{coal_def}). The differentials $d_{oc}$ and $d_{oc}^t$ of degree 1 are encoding the differential graded Lie algebra structures of $\mathcal{F}^{c}(\mathcal{C})\otimes W[5-2d]$ respectively $\mathcal{F}^{c}(\mathcal{C})^{triv}\otimes W[5-2d]$, see example \ref{dgla}. 

We define the underliyng map of graded coalgebras of \eqref{CoALG}:
\begin{df}
Let
\begin{equation}\label{cogb}
C(\mathcal{K}_s\otimes m):\ Sym\big(\mathcal{F}^{c}(\mathcal{C})\otimes W[6-2d]\big)\rightarrow Sym\big(\mathcal{F}^{c}(\mathcal{C})^{triv}\otimes W[6-2d]\big)\end{equation}
be the map given by
\begin{equation*}
C(\mathcal{K}_s\otimes m)^n((x_1\otimes y_1)\odot \ldots\odot(x_n\otimes y_n)):=(-1)^*\mathcal{K}_s^n(x_1\odot\ldots\odot x_n)\otimes (y_1\cdot\ldots\cdot y_n),\end{equation*}
where the sign arises from the Koszul rule of permuting all the $x$'s to the left and the $y$'s to the right. 
\end{df}
\begin{remark}
As explained in remark \ref{nütz} a coalgebra morphism on such a coalgebra is uniquely determined by its Taylor coefficients. In the formula above those are denoted by $\mathcal{K}_s^n$.
\end{remark}
It remains to verify that the map \eqref{cogb} commutes with the differentials, ie. defines a map of $L_\infty$-algebras and thus determines the required map \eqref{L_inf}. The fact that it is an $L_\infty$ \emph{quasi-isomorphism} follows then directly since its first Taylor component will be given by the tensor product of a quasi-isomorphism (by \eqref{inLqi}) and the identity, thus being a quasi-isomorphism. 
\begin{theorem}\label{cwd}
Given a dimension $d$ cyclic $A_\infty$-category $\mathcal{C}$, a splitting $s$ of the non-commutative Hodge filtration (see definition \ref{splitting}) and a $(2d-5)$-twisted BD algebra $W$. Then we have that 
    $$C(\mathcal{K}_s\otimes m)\circ d_{c}=d_{c}^t\circ C(\mathcal{K}_s\otimes m)$$
    for the map $C(\mathcal{K}_s\otimes m)$ from \eqref{cogb}. That is, it defines the desired $L_\infty$ quasi-morphism
$$\mathcal{K}_s\otimes m:\ \mathcal{F}^{c}(\mathcal{C})\otimes W[5-2d]\rightsquigarrow \mathcal{F}^{c}(\mathcal{C})^{triv}\otimes W[5-2d].$$
\end{theorem}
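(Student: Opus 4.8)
The plan is to verify the asserted identity $C(\mathcal{K}_s\otimes m)\circ d_c=d_c^t\circ C(\mathcal{K}_s\otimes m)$ componentwise and to split the resulting system into a part that is \emph{verbatim} the $L_\infty$-relations of the purely closed morphism $\mathcal{K}_s$ from \eqref{inLqi} and a new identity among its Taylor components $\mathcal{K}_s^m$ which, unravelled through Rule \ref{Feynm}, is a combinatorial statement about graphs. First I would pass to Taylor coefficients: by Example \ref{wmg} the chain-map property is equivalent to the relations \eqref{equ:dglaLinfty} for $\mathcal{F}_n:=C(\mathcal{K}_s\otimes m)^n$, and I would read off the structure coderivations from the dg Lie structures via Lemma \ref{tensorBD} and formula \eqref{tenbra}: on the source $Q_1=(d_{Hoch}+uB+\gamma\Delta_c)\otimes 1+1\otimes d_W$ and $Q_2$ is, up to the shift and signs of Example \ref{dgla}, the tensor bracket \eqref{tenbra}, a sum of a term carrying $\{\_,\_\}_c$ on the closed factor and a term carrying $\{\_,\_\}_W$ on the $W$-factor; on the target, by Definition \ref{cBD_t}, the closed differential loses its $\gamma\Delta_c$-term and $\{\_,\_\}_c$ vanishes, so $Q_1^t=(d_{Hoch}+uB)\otimes 1+1\otimes d_W$ and $Q_2^t$ keeps only the $\{\_,\_\}_W$-term of \eqref{tenbra}. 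I would also expand $d_W(y_1\cdots y_n)$ by iterating the $BD$-relation \eqref{BDrel} of $W$ into a derivation part $\sum_i\pm y_1\cdots d_W y_i\cdots y_n$ plus a correction $\gamma\sum_{i<j}\pm\{y_i,y_j\}_W\prod_{k\ne i,j}y_k$. Substituting $\mathcal{F}_n\big((x_1\otimes y_1)\odot\cdots\big)=(-1)^{*}\,\mathcal{K}_s^n(x_1\odot\cdots\odot x_n)\otimes(y_1\cdots y_n)$, each term of \eqref{equ:dglaLinfty} acquires the shape ``(an expression in the $\mathcal{K}_s^\bullet$ evaluated on the $x$'s)\,$\otimes$\,(a product of the $y$'s in $W$ carrying at most one application of $\{\_,\_\}_W$)''.

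Next I would sort all terms by how often $\{\_,\_\}_W$ hits the $W$-inputs, which is always $0$ or $1$. In the $0$-part the common factor $y_1\cdots y_n$, with $d_W$ acting on it as a derivation, pulls out, and what remains is exactly the componentwise form \eqref{equ:dglaLinfty} of $\mathcal{K}_s\colon\mathcal{F}^{c}(\mathcal{C})[5-2d]\rightsquigarrow\mathcal{F}^{c}(\mathcal{C})^{triv}[5-2d]$: the $\{\_,\_\}_c$-term of $Q_2$ gives the $\mathcal{K}_s^{n-1}\circ\{\_,\_\}_c$ contributions, the $\gamma\Delta_c$-part of $Q_1$ gives the $\mathcal{K}_s^{n}\circ\Delta_c$ contributions, and these cancel against $(d_{Hoch}+uB)\circ\mathcal{K}_s^n$ by \eqref{inLqi}. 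The $1$-part receives exactly three kinds of terms: $\gamma\,\mathcal{K}_s^n(x_1\odot\cdots\odot x_n)$ from the $\gamma\{\_,\_\}_W$-correction inside $Q_1^t\mathcal{F}_n$; products $\mathcal{K}_s^{|I|}(x_I)\cdot\mathcal{K}_s^{|J|}(x_J)$ taken in $\mathcal{F}^{c}(\mathcal{C})^{triv}$ from $Q_2^t$; and $\mathcal{K}_s^{n-1}$ evaluated on the merged input $(x_i\cdot x_j)\odot\cdots$ from the $\{\_,\_\}_W$-term of $Q_2$. After stripping the common $W$-factor $\{y_i,y_j\}_W\prod_{k\ne i,j}y_k$ for each pair $\{i,j\}$, the $1$-part is an identity living entirely in $\mathcal{F}^{c}(\mathcal{C})^{triv}$, of $BV_\infty$ flavour, comparing $\mathcal{K}_s^{n-1}$ on a product input with $\gamma\,\mathcal{K}_s^{n}$ and with sums of products $\mathcal{K}_s^{|I|}\cdot\mathcal{K}_s^{|J|}$; this is the content of Theorem \ref{BVinf}.

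The hard part will be Theorem \ref{BVinf} itself. Expanding both sides via Rule \ref{Feynm}, a term of $\mathcal{K}_s^{n-1}$ on $(x_i\cdot x_j)\odot\cdots$ is a sum over connected marked graphs with a distinguished vertex decorated by the desymmetrization \eqref{desym} of $x_i\cdot x_j$, hence carrying an (unordered) partition of its half-edges into an ``$x_i$-block'' and an ``$x_j$-block''. Un-merging that vertex along the partition, \emph{without} inserting an edge, yields an $n$-vertex graph that is either still connected --- in which case, by \eqref{betti} and \eqref{rank}, it has first Betti number exactly one lower, which is precisely the explicit $\gamma$ in the $\gamma\,\mathcal{K}_s^n$ term --- or disconnected into the two components realising a product $\mathcal{K}_s^{|I|}(x_I)\cdot\mathcal{K}_s^{|J|}(x_J)$, with $H_1$ unchanged since merging two graphs at a single vertex does not change it. I would set this up as an explicit bijection on these decorated graphs and check that the weights $1/|\mathrm{Aut}(G,f)|$ and the signs coming from \eqref{desym} and from the graded symmetry of $H^{sym}_s$ match; this sign-and-weight bookkeeping on the ``moduli space of graphs'' is the genuine new input. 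Once Theorem \ref{BVinf} holds both parts of \eqref{equ:dglaLinfty} close, so $C(\mathcal{K}_s\otimes m)$ is a chain map; since its first Taylor component is $\mathcal{K}_s^1\otimes\mathrm{id}_W$, a quasi-isomorphism by \eqref{inLqi}, it is automatically an $L_\infty$ quasi-isomorphism, which is the assertion of Theorem \ref{cwd}.
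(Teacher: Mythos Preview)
Your proposal is correct and follows essentially the same route as the paper. Your ``sort by $0$ or $1$ applications of $\{\_,\_\}_W$'' is exactly the paper's reduction via \eqref{BDr} and \eqref{KiL} to Lemma \ref{key}; the Leibniz expansion you implicitly invoke when ``stripping the common $W$-factor $\{y_i,y_j\}_W\prod_{k\ne i,j}y_k$'' from the $Q_2^t$-term (whose $W$-part is a priori $\{\prod_{L_1}y,\prod_{L_2}y\}_W$) is the paper's Lemma \ref{easy}; and your vertex un-merging bijection is precisely Theorem \ref{gt}, with the connected/disconnected dichotomy matching the sets $B$ and $C$ there.
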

The proof of this theorem occupies the rest of this note (see summary \ref{Zusa}):

As explained in example \ref{wmg}, since we are actually dealing with dg Lie algebras, proving thereom \ref{cwd} is equivalent to showing equation \eqref{equ:dglaLinfty}. Translating our notation to that of equation \eqref{equ:dglaLinfty}, further recalling example \ref{dgla} and equation \eqref{tenbra} means that  
\begin{equation*}
Q_1'=d_{c,t}+d_w,\ \ \ Q_1=d_{c}+d_w,
\end{equation*}
\begin{equation*}
 Q_2'(x_1\otimes y_1,x_2\otimes y_2)=(-1)^{|x_1|+|y_1|+|x_2|}m_c(x_1,x_2)\otimes\{y_1,y_2\}_w,
 \end{equation*}
 \begin{equation*}
    Q_2(x_1\otimes y_1,x_2\otimes y_2)=(-1)^{|x_1|}\{x_1,x_2\}_c\otimes m_w(y_1,y_2)+(-1)^{|x_1|+|y_1|+|x_2|}m_c(x_1,x_2)\otimes\{y_1,y_2\}_w,
    \end{equation*}
    
where we denote (momentarily) by $m_c$ respectively $m_w$ the commutative product on $\mathcal{F}^{c}(\mathcal{C})$ (and by abuse of notation on $\mathcal{F}^{c}(\mathcal{C})^{triv}$) respectively on $W$, by $d_w$ the BD differential on $W$; by $d_c$ the BD differential of $\mathcal{F}^{c}(\mathcal{C})$ and by $d_{c,t}$ the BD differential of $\mathcal{F}^{c}(\mathcal{C})^{triv}$. 

Summarizing, proving theorem \ref{cwd} is equivalent to showing that following equation\footnote{Here $(-1)^{*_1}$ is given by the Koszul rule of permuting the $x$'s to the left and the $y$'s to the right. $(-1)^{*_2}=(-1)^{\sum_{i}|x_i|+\sum_{i\in L_1}|y_i|}$ times the Koszul rule of permuting the factors of ${(x_1\otimes y_1)\odot\cdots \odot (x_m\otimes y_m)}$ into the respective order. $(-1)^{*_3}=\epsilon(i,j,1,..,\hat{i},..,\hat{j},.., n)$. $(-1)^{*_4}=(-1)^{(\sum_i|x_i|)+|y_i|} $ times the Koszul rule of permuting the factors. $(-1)^{*_5}=(-1)^{|x_i|}$ times the Koszul rule of permuting the factors.} holds:
\begin{align}\label{inter}
&(d_{c,t}+d_w)(-1)^{*_1}\mathcal{K}_n(x_1\odot\cdots \odot x_n)\otimes (y_1\cdot\ldots\cdot y_m)\nonumber\\
+&\sum_{\substack{L_1\sqcup L_2=[n]\nonumber \\ |L_1|,|L_2|\geq 1}}(-1)^{*_2}\big(\mathcal{K}_{|L_1|}(\bigodot_{i\in L_1} x_i)\cdot\mathcal{K}_{|L_2|}(\bigodot_{i\in L_2} x_i)\big)\otimes \{\Pi_{i\in L_1} y_i,\Pi_{j\in L_2} y_j\}_w\nonumber\\
=&\sum_{i}(-1)^{*_3}(\mathcal{K}_s\otimes m)^n\big((d_{c}+d_w)(x_i\otimes y_i)\odot(x_1\otimes y_1)\odot\cdots \widehat{(x_i\otimes y_i)}\cdots \odot (x_m\otimes y_n)\big)\nonumber\\
+&\sum_{i\neq j}(-1)^{*_4}\mathcal{K}_{m-1}\big((x_i\cdot x_j)\odot x_1\odot\ldots \hat{x_i}\ldots\hat{x_j}\ldots\odot x_m\big)\otimes\big(\{y_i,y_j\}_w\cdot  y_1\cdot\ldots \hat{y_i}\ldots\hat{y_j}\ldots\cdot y_m\big)\nonumber\\
+&\sum_{i\neq j}(-1)^{*_5}\mathcal{K}_{m-1}\big(\{x_i, x_j\}_c\odot x_1\odot\ldots \hat{x_i}\ldots\hat{x_j}\ldots\odot x_m\big)\otimes  \big((y_i\cdot y_j)\cdot y_1\cdot\ldots\hat{y_i}\ldots\hat{y_j}\ldots \cdot y_m\big)
\end{align}

Using following two facts \eqref{BDr} and \eqref{KiL} we show that proving theorem \ref{cwd}, which is equivalent to proving that equation \eqref{inter} holds, is in fact equivalent to proving lemma \ref{key} below.

The fact that $W$ is a BD algebra implies that\footnote{Here the sign $(-1)^{*_6}$ results from permuting the odd operator $d_w$ through the $y_1,\ldots,y_{i-1}.$ Further we have $(-1)^{*_7}=(-1)^{|y_i|}\epsilon (i,j,\dots,\hat{i},\dots,\hat{j},\dots, n) $}
\begin{align}\label{BDr}
\mathcal{K}_n(x_1\odot\cdots \odot x_n)\otimes d_w(y_1\cdot\ldots\cdot y_n)&\nonumber\\
=\sum_i(-1)^{*_6}\mathcal{K}_n(x_1\odot\cdots \odot x_n)\otimes (y_1\cdot&\ldots \cdot d_wy_i\cdot\ldots\cdot y_m)\\+\sum_{i\neq j}(-1)^{*_7}&\mathcal{K}_n(x_1\odot\cdots \odot x_n)\otimes\big(\gamma\{y_i,y_j\}_w\cdot  y_1\cdot\ldots \hat{y_i}\ldots\hat{y_j}\ldots\cdot y_m\big).\nonumber
\end{align}
Further by equation \eqref{equ:dglaLinfty} the fact that the maps $\mathcal{K}^n_s$ define an $L_\infty$-algebra morphism (by equation 26 of \cite{AmTu22}, following theorem 7.1 of \cite{CaTu24}) implies that\footnote{Here $(-1)^{*_8}=\epsilon(i,1,\dots,\hat{i},\dots, n)$ and $(-1)^{*_9}=\epsilon(i,j,1,..,\hat{i},..,\hat{j},.., n)(-1)^{|x_i|}$}
\begin{align}\label{KiL}
    d_{c,t}\mathcal{K}_n(x_1\odot\cdots \odot& x_n)\otimes  \big(y_1\cdot\ldots \cdot y_m\big)\nonumber\\
=  \sum_{i}(-1)^{*_8}\mathcal{K}_s^n\big(d_{c}x_i\odot x_1&\odot\cdots \widehat{x_i}\odot \cdots \odot x_m\big)\otimes  \big(y_1\cdot\ldots \cdot y_m\big)\\ 
&+\sum_{i\neq j}(-1)^{*_9}\mathcal{K}_{m-1}\big(\{x_i, x_j\}_c\odot x_1\odot\ldots \hat{x_i}\ldots\hat{x_j}\ldots\odot x_m\big)\otimes  \big(y_1\cdot\ldots \cdot y_m\big)\nonumber.
\end{align}
Using these facts, showing that equation \eqref{inter} reduces to the equation in the following lemma is now basic algebra: Multiplying equality \eqref{KiL} by the Koszul sign of permuting the $x$'s of ${(x_1\otimes y_1)\odot\cdots \odot (x_m\otimes y_m)}$ to the left and the $y$'s to the right shows that the first summand of the first line of \eqref{inter} is equal to the first summand of the third summand and the last line. Multiplying equality \eqref{BDr} by the same Koszul sign times $(-1)^{\sum_i|x_i|}$ implies that the second summand of the first line of \eqref{inter} is equal to the second summand of the third line of \eqref{inter} plus the last line of \eqref{BDr} (multiplied by that same sign). Since also the second and forth line of \eqref{inter} is multiplied by $(-1)^{\sum_i|x_i|}$ we have proven the claim. Thus to prove theorem \ref{cwd} it remains to show the following.
\begin{lemma}\label{key}
For all $m>1$ and $(x_1\otimes y_1)\odot\cdots \odot (x_m\otimes y_m)\in Sym^m(\mathcal{F}^{c}(\mathcal{C})\otimes W[6-2d])$ we have 
\begin{align*}
\sum_{i\neq j}(-1)^{*_a}\mathcal{K}_{m-1}\big(&(x_i\cdot x_j)\odot x_1\odot\ldots \hat{x_i}\ldots\hat{x_j}\ldots\odot x_m\big)\otimes\big(\{y_i,y_j\}_w\cdot  y_1\cdot\ldots \hat{y_i}\ldots\hat{y_j}\ldots\cdot y_m\big)\\
=\sum_{\substack{L_1\sqcup L_2=[m] \\ |L_1|,|L_2|\geq 1}}(-1)^{*_b}&\big(\mathcal{K}_{|L_1|}(\bigodot_{i\in L_1} x_i)\cdot\mathcal{K}_{|L_2|}(\bigodot_{i\in L_2} x_i)\big)\otimes \{\Pi_{i\in L_1} y_i,\Pi_{j\in L_2} y_j\}_w\\
+&\gamma\sum_{i\neq j}(-1)^{*_c}\mathcal{K}_m(x_1\odot\cdots \odot x_m)\otimes\big(\{y_i,y_j\}_w\cdot y_1\cdot\ldots \hat{y_i}\ldots\hat{y_j}\ldots\cdot y_m\big)
\end{align*}
The signs arise from the Koszul rule of permuting the (odd) factors of ${(x_1\otimes y_1)\odot\cdots \odot (x_m\otimes y_m)}$ into the respective order; further from moving the $y_i$ respectively the $\Pi_{i\in L_1} y_i$ into the odd bracket $\{\_,\_\}_w.$ 
\end{lemma}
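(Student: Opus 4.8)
The strategy is to first eliminate the auxiliary BD algebra $W$, reducing Lemma~\ref{key} to an identity among the \emph{closed} Taylor coefficients $\mathcal{K}_s^{\bullet}$ alone, and then to prove that identity by expanding both sides with the Feynman rule~\ref{Feynm} and matching the two expansions through a single ``vertex--splitting'' bijection on marked graphs.

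\textbf{Step 1 (reduction to a closed identity).} Since $\{\_,\_\}_w$ is the Poisson bracket of a BD algebra it is a biderivation for $m_w$, so
\[
\{\,\textstyle\prod_{i\in L_1}y_i,\ \textstyle\prod_{j\in L_2}y_j\,\}_w \;=\; \sum_{i\in L_1}\sum_{j\in L_2}(-1)^{\bullet}\,\{y_i,y_j\}_w\cdot\!\!\prod_{k\neq i,j}\! y_k .
\]
Substituting this into the right-hand side of Lemma~\ref{key} and re-indexing, all three terms become sums over ordered pairs $(i,j)$, $i\neq j$, of $(-1)^{\bullet}\{y_i,y_j\}_w\cdot\prod_{k\neq i,j}y_k$ tensored with an expression in the $x$'s and $\gamma$. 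Hence Lemma~\ref{key} holds for every $W$ once we establish, for a fixed pair (say $i=1$, $j=2$), the identity in $\mathcal{F}^c(\mathcal{C})^{triv}$
\begin{align*}
(-1)^{\bullet}\,\mathcal{K}_{m-1}\big((x_1 x_2)\odot x_3\odot\cdots\odot x_m\big)
&=\sum_{\substack{L_1\sqcup L_2=[m]\\ 1\in L_1,\ 2\in L_2}}(-1)^{\bullet}\,\mathcal{K}_{|L_1|}\big(\textstyle\bigodot_{\ell\in L_1}x_\ell\big)\cdot\mathcal{K}_{|L_2|}\big(\textstyle\bigodot_{\ell\in L_2}x_\ell\big)\\
&\qquad+\ \gamma\,(-1)^{\bullet}\,\mathcal{K}_{m}\big(x_1\odot\cdots\odot x_m\big),
\end{align*}
which is exactly theorem~\ref{BVinf} and involves only the construction of Rule~\ref{Feynm}.

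\textbf{Step 2 (Feynman expansion and vertex splitting).} Assume each $x_\ell=w_\ell\gamma^{g_\ell}$ is $\gamma$-homogeneous with $w_\ell\in Sym^{n_\ell}(\mathcal{H})$. Expanding the left-hand side by Rule~\ref{Feynm} yields a sum over connected marked graphs $(G,f)$ on $m-1$ vertices in which $v_0:=f(1)$ carries $x_1x_2\in Sym^{n_1+n_2}(\mathcal{H})$; its desymmetrisation $\widetilde{x_1 x_2}$ splits as a signed sum over subsets $A\subset h_{v_0}$, $|A|=n_1$, with $A$ decorated by $\widetilde{x_1}$ and $A^{c}$ by $\widetilde{x_2}$. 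To a triple $(G,f,A\sqcup A^{c})$ associate the marked graph $G'$ on $m$ vertices obtained by splitting $v_0$ into $v_A$ (half-edges $A$, loop defect $g_1$) and $v_{A^{c}}$ (half-edges $A^{c}$, loop defect $g_2$), with $f'(1)=v_A$, $f'(2)=v_{A^{c}}$ and $f'(\ell)=f(\ell-1)$ otherwise. Since $G'$ carries the same edges and the same half-edge incidences as $G$, all $H^{sym}_s$-contractions — hence the tensor $Z$ — are unchanged; only the $(|v|-1)(6-2d)$ shift of Rule~\ref{Feynm}(3) and the Betti number differ. As $G$ is connected there are exactly two cases. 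Either $G'$ is connected, and then $\operatorname{rank}H_1(G')=\operatorname{rank}H_1(G)-1$, so $g(G')=g(G)-1$ and the missing power of $\gamma$ is supplied by the explicit $\gamma$ in the last term of Step~1; or $G'=G_1\sqcup G_2$ with $v_A\in G_1$, $v_{A^{c}}\in G_2$, inducing a partition $L_1\sqcup L_2=[m]$ with $1\in L_1$, $2\in L_2$ and $g(G)=g(G_1)+g(G_2)$, the contribution being exactly a term $\mathcal{K}_{|L_1|}(\bigodot_{L_1}x)\cdot\mathcal{K}_{|L_2|}(\bigodot_{L_2}x)$. Conversely, merging $f'(1)$ with $f'(2)$ recovers $(G,f,A\sqcup A^{c})$; so $(G,f,A\sqcup A^{c})\mapsto G'$ is a bijection between connected marked graphs on $m-1$ vertices equipped with a partition of the half-edges at $f(1)$ and marked graphs on $m$ vertices that are either connected or separate $1$ and $2$ into distinct components.

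\textbf{Step 3 (weights, signs, degrees).} It remains to match rational coefficients and Koszul signs. For the coefficients one uses orbit--stabiliser: $Aut(G,f)$ acts on the partitions $A\sqcup A^{c}$ of $h_{f(1)}$, the stabiliser of a given one equals $Aut(G',f')$, and partitions in a common orbit give isomorphic $G'$, so $\sum_{A\sqcup A^{c}\,\mapsto\,G'}|Aut(G,f)|^{-1}=|Aut(G',f')|^{-1}$; for $G'=G_1\sqcup G_2$ one moreover has $|Aut(G',f')|=|Aut(G_1,f_1)|\cdot|Aut(G_2,f_2)|$, as needed for the product of Taylor coefficients. The degree count is consistent because the multiplication on the shifted space $\mathcal{F}^c(\mathcal{C})[6-2d]$ has degree $6-2d$, precisely that of the $\gamma$ on the right. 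The genuinely delicate point, which I expect to be the main obstacle, is the sign comparison — reconciling the shuffle sign from $\widetilde{x_1 x_2}$, the Koszul signs of these degree-$(6-2d)$ multiplications, the signs from inserting the $y$'s into $\{\_,\_\}_w$, and the prescribed signs $*_a,*_b,*_c$ — together with a careful orbit--stabiliser bookkeeping when $f(1)$ carries parallel edges or self-loops; the vertex-splitting bijection of Step~2 is, by contrast, the conceptual heart of the proof.
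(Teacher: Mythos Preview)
Your proposal is correct and follows essentially the same route as the paper. The paper likewise reduces Lemma~\ref{key} to the purely closed identity of Theorem~\ref{BVinf} via the biderivation property of $\{\_,\_\}_w$ (packaged there as Lemma~\ref{easy}), and proves Theorem~\ref{BVinf} by the same vertex-splitting bijection you describe (stated there as the standalone graph-theoretic Theorem~\ref{gt}); the only cosmetic differences are that the paper places the new vertex at position $m$ rather than $2$, and absorbs your orbit--stabiliser count into an auxiliary ``contract exactly once'' operation $\widetilde{ev}_{(G,f)}$ together with the equivalence classes $P_{(G,f)}^{k_1,k_2}$, which is just a repackaging of the same automorphism bookkeeping.
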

In order to prove this key lemma, which is found below the proof of lemma \ref{easy}, we need to elaborate a bit: 
\subsection{An Interlude on Graphs}
In this subsection we prove theorem \ref{gt}, which is essential in the proof of the key lemma \ref{key}.
\begin{cons}\label{Cons1}
Given a connected marked graph $G$ with $m-1$ vertices 
and a partition $I\sqcup J=h_1$ of the halfedges $h_1$ belonging to the (for example) first vertex of the graph we can construct a new marked graph with $m$ vertices
where now only the halfedges $I$ belong to the first vertex and the halfedges $J$ are declared to belong to a new vertex, marked $m$.
We denote the new graph by $G^{I,J}$.
However, we need to be careful as in this process we may create two disconnected marked graph, denoted $G^{I}$ respectively $G^{J}$,
which for instance always happens when our starting graph is a star. See the proof of theorem \ref{gt} below for how we mark these new graphs.
\end{cons}
We will see that we are also able to reverse this procedure. To make this more precise we introduce the following notations.
\begin{df}
For a given marked graphs $(G,f)$ and two partitions of their halfedges belonging to the first vertex $I_1\sqcup J_1=h_1$ respectively $I_2\sqcup J_2=h_1$ we say $$ ( I_1,J_1)\sim ( I_2,J_2)\ \ \text{if}\
 \begin{cases} G^{I_1,J_1}\cong G^{I_2,J_2},& \text{if the resulting graphs are connected} \\
G^{I_1}\cong G^{I_2}\ \text{and}\ G^{J_1}\cong G^{J_2},& \text{otherwise.}
\end{cases}
$$
To be precise the isomorphism have to be from the category of marked graphs.
 Further given a graph $(G,f)\in \widetilde{\Gamma(g,n)}_{m-1}$ and $k_1,k_2\in\mathbb{N}_{>0}$ denote by $$P_{(G,f)}^{k_1,k_2}:=\{I\sqcup J= h_{1}\ |\ |I|=k_1,|J|=k_2\}/\sim.$$
\end{df}
\begin{df}\label{grdf}
Fix $0\leq g$, $n<0$ and $k_1,k_2\in\mathbb{N}_{>0}$. Denote
\begin{align*}
A_{k_1,k_2}^{g,n}:=&\{(G,f)\in \widetilde{\Gamma(g,n)}_{m-1}, I,J\in P_{(G,f)}^{k_1,k_2}\},\\
B_{k_1,k_2}^{g,n}:=&\{(G,f)\in \widetilde{\Gamma(g,n)}_m\ |\  |h_{1}|=k_1,|h_{m}|=k_2\}\\
C_{k_1,k_2}^{g,n}:=&\{(G_1,f_1)\in \widetilde{\Gamma(g_1,n_1)}_{m_1},(G_2,f_2)\in \widetilde{\Gamma(g_2,n_2)}_{m_2},\ L_1\sqcup L_2=\{1,\cdots,m\}| 1\in L_1,m\in L_2 \\
&\ g_1+g_2=g, m_1+m_2=m, n_1+n_2=n, 
|h_1(G_1)|=k_1, |h_m(G_2)|=k_2   \}
\end{align*}
Further given $1\leq i<j\leq m$ we denote 
\begin{align*}
\widetilde{B}_{k_1,k_2}^{g,n}:=&\{(G,f)\in \widetilde{\Gamma(g,n)}_m\ |\  |h_{i}|=k_1,|h_{j}|=k_2\}\\
\widetilde{C}_{k_1,k_2}^{g,n}:=&\{(G_1,f_1)\in \widetilde{\Gamma(g_1,n_1)}_{m_1},(G_2,f_2)\in \widetilde{\Gamma(g_2,n_2)}_{m_2},\ L_1\sqcup L_2=\{1,\cdots,m\}| i\in L_1,j\in L_2 \\
&\ g_1+g_2=g, m_1+m_2=m, n_1+n_2=n, 
|h_i(G_1)|=k_1, |h_j(G_2)|=k_2   \}
\end{align*}
\end{df}
Obviously we have that \begin{equation}\label{obv2}
{B}_{k_1,k_2}^{g,n}\cong\widetilde{B}_{k_1,k_2}^{g,n}
\end{equation}
\begin{equation}\label{obv}
{C}_{k_1,k_2}^{g,n}\cong\widetilde{C}_{k_1,k_2}^{g,n}
\end{equation}
by swapping $1$ with $i$ and $m$ with $j$.
\begin{theorem}\label{gt}
   Fix $0\leq g$, $0<n$ and $k_1,k_2\in\mathbb{N}$. Then there is a bijection 
   $$\psi: A_{k_1,k_2}^{g,n}\cong B_{k_1,k_2}^{g,n}\cup C_{k_1,k_2}^{g,n}.$$
\end{theorem}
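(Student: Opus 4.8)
The strategy is to exhibit $\psi$ explicitly via Construction \ref{Cons1} and then build an inverse by ``merging'' the $m$-th vertex (or the second component) back into vertex $1$. Given $((G,f),[I,J])\in A_{k_1,k_2}^{g,n}$, I would send it to $G^{I,J}\in B_{k_1,k_2}^{g,n}$ if this graph is connected, and to the pair $(G^I,G^J)$ together with the obvious partition $L_1\sqcup L_2$ of the new vertex set if it is not; here one must fix once and for all how the new vertices are marked (vertex $m$ carries the half-edges $J$, and in the disconnected case the component containing $J$ inherits a compatible re-marking order-preservingly, with $m$ as the maximal label). The first step is to check this is well-defined on $\sim$-classes, which is essentially the \emph{definition} of $\sim$: two partitions equivalent under $\sim$ are precisely those producing isomorphic output in the category of marked graphs, in the connected and disconnected cases respectively.

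\textbf{The inverse.} In the other direction, given $(G',f')\in B_{k_1,k_2}^{g,n}$, let $v_1$ be vertex $1$ and $v_m$ vertex $m$; form $G$ by deleting $v_m$, attaching all its half-edges to $v_1$, and re-marking $v_1,\dots,v_{m-1}$ by the induced order; the edges formerly incident to $v_m$ determine a partition $I\sqcup J=h_1$ of the resulting half-edge set at $v_1$ (with $J$ = the half-edges that came from $v_m$), and one checks $G^{I,J}\cong G'$. For $((G_1,f_1),(G_2,f_2),L_1\sqcup L_2)\in C_{k_1,k_2}^{g,n}$, one similarly glues: the component $G_2$ (which contains the vertex labelled $m$ by the condition $m\in L_2$) gets absorbed, its vertices re-indexed, and its ``vertex $m$'' half-edges become the block $J$; this produces a connected or disconnected graph on $m-1$ vertices together with a partition of $h_1$. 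The only genuine content is verifying that these two assignments are mutually inverse and descend correctly through the $\sim$-quotient and the isomorphism-class quotients; the invariants $g$ (via \eqref{betti} and \eqref{rank}: deleting a vertex of valence $k_2$ and reconnecting changes $|v|$ by $1$ and $|e|$ by the number of former internal edges at $v_m$ in a way that preserves $\mathrm{rank}\,H_1+\sum g(v)$) and the leaf number $n$ are manifestly preserved by construction, since we never touch leaves.

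\textbf{Main obstacle.} The delicate point is the bookkeeping of \emph{markings} in the disconnected case, and more precisely ensuring that the map is a genuine bijection rather than a surjection with controlled fibers. When $G^{I,J}$ is disconnected, the data of ``which component is $G^I$ and which is $G^J$'' and ``how the labels $\{1,\dots,m\}$ split into $L_1\sqcup L_2$'' must match up canonically with the choice made in $C_{k_1,k_2}^{g,n}$ (namely $1\in L_1$, $m\in L_2$). One must check that the re-marking procedure is forced — i.e. there is no residual ambiguity, because the order-preserving bijection from $\{1,\dots,m_1\}$ (resp.\ $\{1,\dots,m_2\}$) to the vertex set of $G^I$ (resp.\ $G^J$) is unique — and that the $\sim$-relation on $A$ exactly collapses the fibers of the naive vertex-splitting map. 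A secondary subtlety is the star case flagged in Construction \ref{Cons1}: when $G$ is a star centred at $v_1$, splitting always disconnects, and one should confirm the inverse still lands in $A$ (the ``merged'' graph is again a star, and the partition records which leaves went to $v_m$). Once these marking conventions are pinned down, well-definedness and bijectivity follow by direct inspection, using \eqref{obv2} and \eqref{obv} only to reduce from the ``$i<j$ arbitrary'' formulation to the normalized ``$1,m$'' one.
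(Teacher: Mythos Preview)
Your proposal is essentially the same argument as the paper's: forward via Construction \ref{Cons1} (split vertex $1$ according to $I\sqcup J$, the new vertex is marked $m$, and in the disconnected case record $L_1\sqcup L_2$ by which old vertices land in which component and re-mark order-preservingly), backward by merging vertices $1$ and $m$ into a single vertex marked $1$ and using $L_1\sqcup L_2$ to recover the marking of the remaining $m-2$ vertices. The paper's proof is in fact terser than yours on the bookkeeping, simply declaring the inverse check ``straightforward''; your discussion of how the $\sim$-quotient matches the fibers and how the marking is forced by the order-preserving condition is the correct content behind that word.

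One small slip: in your inverse from $C$, gluing two \emph{connected} marked graphs by identifying vertex $1$ of $G_1$ with vertex $m$ of $G_2$ always yields a \emph{connected} graph on $m-1$ vertices, never a disconnected one --- so your phrase ``connected or disconnected'' should just read ``connected''. This is harmless for the argument. Also, the references to \eqref{obv2} and \eqref{obv} are not needed for this theorem itself (they are used downstream in Theorem \ref{BVinf}); the statement here is already in the normalized $(1,m)$ form.
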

\begin{proof}
    We have already described in construction \ref{Cons1} how given an element of $A_{k_1,k_2}^{g,n}$ we produce (a) graph(s) on the right hand side. In the connected case we also described the marking. In the disconnected case we simply use the old marking, but reorder according to size. Further in the disconneted case the partition $L_1\sqcup L_2=\{1,\cdots,m\}$ is obtained by remembering which of the previous marked vertices are now part of $G_1$ (including the old first one) and which of the previous marked vertices are now part of $G_2$ (including the new last one, which was obtained by splitting the old first one into two).
    
    Given an element in the right hand side (be it connected or not), whose total number of vertices is $m$, we send it to the graph determined by declaring the first and $m$-th vertex of the initial graph(s) to coincide and which we number by $1$, then.
    In the case the graph came from $B$, i.e. was connected, it is straightforward to number the remaining vertices of the resulting new graph.
    In the case the graph comes from $C$ we use the partition $L_1\sqcup L_2=\{1,\cdots,m\}$ which satisfied the condition that $ 1\in L_1,m\in L_2$ to mark the remaining vertices of the resulting new graph. To define the partition into two sets of the halfedges of the `new' vertex of $\psi^{-1}(G,f)$ we remember which ones came from the first vertex of $(G,f)$ and which came from the $m$-th vertex of $(G,f)$. 
    
    Finally it is straightforward to check that these two assignments are inverse to each other.
\end{proof}
\subsection{The Proof of Theorem A continued}
In turn theorem \ref{gt} allows us to prove following key technical theorem:
\begin{theorem}\label{BVinf}
For $m>1$ and $x_1,\cdots, x_m\in \mathcal{F}^{c}(\mathcal{C})[6-2d]$ and $1\leq i\neq j\leq m$ we have that\footnote{Here $(-1)^*=\epsilon(i,j,1,..,\hat{i},..,\hat{j},.., n)$ and $(-1)^\#=\epsilon(L_1,L_2)$.}
\begin{align*}
&(-1)^*\mathcal{K}_{m-1}\big((x_i\cdot x_j)\odot x_1\odot\ldots \hat{x_i}\ldots\hat{x_j}\ldots\odot x_m\big)\\
=&\gamma\mathcal{K}_m(x_1\odot\cdots \odot x_m)+\sum_{\substack{L_1\sqcup L_2=[m] \\  i\in L_1,j\in L_2}}(-1)^\#\big(\mathcal{K}_{|L_1|}(\bigodot_{i\in L_1} x_i)\cdot\mathcal{K}_{|L_2|}(\bigodot_{i\in L_2} x_i)\big)
\end{align*}
\end{theorem}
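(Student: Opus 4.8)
The plan is to expand both sides using the explicit Feynman-graph definition of the Taylor coefficients $\mathcal{K}_s^m$ (Rule \ref{Feynm}) and then match the graph sums term by term via the bijection $\psi$ of Theorem \ref{gt}. The starting observation is that the left-hand side involves $\mathcal{K}_{m-1}$ applied to a symmetric word whose first slot is the product $x_i\cdot x_j$; since $\mathcal{K}_{m-1}$ is built from marked graphs with $m-1$ vertices and its nonzero components only pair a vertex $v_k$ with an argument lying in $\mathcal{F}^c(\mathcal{C})[6-2d]|_{g_k,n_k}$, the product $x_i\cdot x_j$ gets desymmetrized into a tensor whose half-edges at the distinguished vertex come with a \emph{chosen partition} into the half-edges coming from $x_i$ and those coming from $x_j$. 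This is exactly the combinatorial datum $(I,J)\in P_{(G,f)}^{k_1,k_2}$ appearing in the set $A_{k_1,k_2}^{g,n}$. So after summing over $g,n$ and over all graphs, the left-hand side is naturally indexed by $A_{k_1,k_2}^{g,n}$ (with the appropriate automorphism weights), where $k_1,k_2$ run over the lengths coming from $\widetilde{x_i},\widetilde{x_j}$.

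Next I would analyze the right-hand side. The term $\gamma\,\mathcal{K}_m(x_1\odot\cdots\odot x_m)$ is, by Rule \ref{Feynm}, a sum over connected marked graphs with $m$ vertices; the extra factor of $\gamma$ shifts the Betti number by one, which matches the fact that gluing the half-edges of $x_i$ to those of $x_j$ along a new internal edge either raises $\operatorname{rank} H_1$ by one (when the graph stays connected) or keeps it fixed but splits the graph (when it disconnects). Concretely: the graphs in $B_{k_1,k_2}^{g,n}$ — connected, $m$ vertices, with the $i$-th and $j$-th valencies equal to $k_1,k_2$ — are precisely what $\gamma\,\mathcal{K}_m$ contributes after one contracts the $i$-th against the $j$-th vertex; the disconnected case is exactly $C_{k_1,k_2}^{g,n}$, which is matched by the product term $\sum (\mathcal{K}_{|L_1|}\cdots)\cdot(\mathcal{K}_{|L_2|}\cdots)$ with $i\in L_1$, $j\in L_2$, since the commutative product on $\mathcal{F}^c(\mathcal{C})^{triv}$ is just disjoint union of graphs. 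Using the identifications \eqref{obv2} and \eqref{obv} to move the distinguished vertices from positions $(1,m)$ to positions $(i,j)$, the whole right-hand side becomes a sum indexed by $B_{k_1,k_2}^{g,n}\cup C_{k_1,k_2}^{g,n}$.

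With both sides reorganized, Theorem \ref{gt} furnishes the bijection $\psi\colon A_{k_1,k_2}^{g,n}\xrightarrow{\ \sim\ } B_{k_1,k_2}^{g,n}\cup C_{k_1,k_2}^{g,n}$, and construction \ref{Cons1} shows that the linear-algebra contribution $K_{(G,f)}$ attached to a graph on the left equals the contribution attached to $\psi(G,f)$ on the right: splitting the distinguished vertex into two along the chosen partition $(I,J)$, and then separating the corresponding two tensor factors before applying $H^{sym}_s$, is the same as \emph{not} contracting that pair (in the $B$/$\gamma\mathcal{K}_m$ case the new edge is a genuine internal edge contracted by $H^{sym}_s$, raising $g$ by one — hence the $\gamma$; in the $C$ case the two factors sit on different connected components and are simply read off as leaves of the product). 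What remains is the bookkeeping of the automorphism factors $\tfrac{1}{|\mathrm{Aut}(G,f)|}$ and of the Koszul/shuffle signs $(-1)^*$ and $(-1)^\#=\epsilon(L_1,L_2)$: the orbit-counting $\sim$ used to define $P_{(G,f)}^{k_1,k_2}$ is designed so that summing over partitions $(I,J)$ up to automorphism, weighted by $1/|\mathrm{Aut}|$, reproduces exactly the weights $1/|\mathrm{Aut}(\psi(G,f))|$ on the target — this is the standard ``automorphisms of a graph act on the set of edge-decorations'' argument — while the signs are forced by the desymmetrization \eqref{desym} and the gradedness of $H^{sym}_s$ and match the shuffle signs on the right because $H^{sym}_s$ is even.

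I expect the main obstacle to be precisely this last step: tracking that the automorphism weights and the Koszul signs agree under $\psi$ on the nose, especially at the boundary of the two cases (connected versus disconnected image) and when the distinguished vertex has automorphisms exchanging half-edges of $I$ with half-edges of $J$, or when the two components in the $C$-case happen to be isomorphic (so that the product $\mathcal{K}_{|L_1|}\cdot\mathcal{K}_{|L_2|}$ is counted with an extra symmetry). Handling these degenerate symmetric cases carefully — showing that the factor-of-two discrepancies cancel against the $1/|\mathrm{Aut}|$ normalizations and the sum over ordered versus unordered partitions $L_1\sqcup L_2$ — is the delicate part; everything else is a direct translation of Rule \ref{Feynm} through the bijection of Theorem \ref{gt}.
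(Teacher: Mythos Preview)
Your overall strategy coincides with the paper's: write the left-hand side as a sum over $A_{k_1,k_2}^{g,n}$, the right-hand side as a sum over $B_{k_1,k_2}^{g,n}\cup C_{k_1,k_2}^{g,n}$, and match term by term via the bijection $\psi$ of Theorem~\ref{gt}, using \eqref{obv2} and \eqref{obv} to move the distinguished vertices to positions $(i,j)$.

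There is one concrete error in how you describe the mechanism. You say that in the connected ($B$) case ``the new edge is a genuine internal edge contracted by $H^{sym}_s$, raising $g$ by one'', and similarly speak of ``gluing the half-edges of $x_i$ to those of $x_j$ along a new internal edge''. This is not what Construction~\ref{Cons1} does: splitting the distinguished vertex introduces \emph{no} edge; the edge set of $G^{I,J}$ is identical to that of $G$, so the set of $H^{sym}_s$-contractions is exactly the same on both sides of $\psi$. The extra $\gamma$ arises purely from the Betti-number bookkeeping \eqref{betti}, \eqref{rank}: with $|e|$ unchanged and $|v|$ increased by one, $\operatorname{rank} H_1$ drops by one in the connected case, so $\psi(G,f,I,J)\in B$ has Betti number $g-1$ and contributes $\gamma^{g-1}$ through $\mathcal{K}_m$, whence the compensating $\gamma$ (this is precisely the paper's remark following \eqref{ia1}). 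In the disconnected case the number of components also increases by one, so the total Betti number is unchanged and no extra $\gamma$ appears. Followed literally, your description would insert a spurious $H^{sym}_s$ factor that is not present.

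On your anticipated ``delicate part'' (automorphism weights and the degenerate symmetric cases): the paper sidesteps a case analysis by passing to the normalization $\widetilde{ev}_{(G,f)}:=\tfrac{1}{|Aut(G,f)|}K_{(G,f)}$ of \eqref{id1} and by building the equivalence $\sim$ directly into the definition of $P_{(G,f)}^{k_1,k_2}$. With these conventions the identities \eqref{ia1} and \eqref{ia2} hold summand by summand under $\psi$, so no separate orbit-counting argument is required.
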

\begin{proof}
By definition we have 
\begin{align*}
&(-1)^*\mathcal{K}_{m-1}\big((x_i\cdot x_j)\odot x_1\odot\ldots \hat{x_i}\ldots\hat{x_j}\ldots\odot x_m\big)\\=&\sum_{g,n} \sum_{ (G,f)\in \widetilde{\Gamma(g,n)}_{m-1}}(-1)^*
  \frac{1}{|Aut(G,f)|}\cdot K_{(G,f)}\big((x_i\cdot x_j),x_1,\ldots, \hat{x_i},\ldots,\hat{x_j},\ldots, x_m\big)\\
  =&:E
  \end{align*}

We denote by $\widetilde{ev}_{G,f}(x_1,\cdots,x_m)$ the operation of \textit{contracting exactly once the letters of the symmetric words} $\{x_l\}_{l=1,\cdots,m}\in\mathcal{F}^c(\mathcal{C})[6-2d]$ with each other, using $H^{sym}$ and according to the datum given by a marked graph $(G,f)$ having $m$ vertices, weighted appropriately by $\gamma$. This gives the identification 
\begin{equation}\label{id1}
\widetilde{ev}_{(G,f)}(x_1,\cdots,x_m)=\frac{1}{|Aut(G,f)|}\cdot K_{(G,f)}(x_1,\cdots,x_m),\end{equation}
recalling definition \ref{Feynm}.
We can ask whether we can re-express in a similar way
\begin{equation}\label{summe}
(-1)^*\frac{1}{|Aut(G,f)|}\cdot K_{(G,f)}(x_i\cdot x_j,x_1,\cdots,\hat{x_i},\cdots,\hat{x_j},\cdots,x_m),\end{equation}
where $x_i\in Sym^{k_1}(\cdots)$ and $x_j\in Sym^{k_2}(\cdots)$.
Indeed, again recalling the definition \eqref{Feynm}, this term equals a sum which we can naturally index by $(I,J)\in P_{(G,f)}^{(k_1,k_2)}$ and whose summands we suggestively denote by 
\begin{equation}\label{se}
(-1)^*\widetilde{ev}_{(G,f,I,J)}(x_i, x_j,x_1,\cdots,\hat{x_i},\cdots,\hat{x_j},\cdots,x_m).\end{equation}
\begin{remark}
We will give a precise meaning to \eqref{se} in equations \eqref{ia1} and \eqref{ia2}.
\end{remark}For the moment we repeat that 
\begin{align*}
\sum_{I,J\in P_{(G,f)}}(-1)^*\widetilde{ev}_{(G,f,I,J)}&(x_i, x_j,x_1\cdots,\hat{x_i},\cdots,\hat{x_j},\cdots,x_m)\label{fs}\\
&=(-1)^*\frac{1}{|Aut(G,f)|}\cdot K_{(G,f)}(x_i\cdot x_j,x_1\cdots,\hat{x_i},\cdots,\hat{x_j},\cdots,x_m)\nonumber
\end{align*}
Thus we can write
\begin{equation}\label{E}
E=\sum_{g,n} \sum_{ (G,f)\in \widetilde{\Gamma(g,n)}_{m-1}}\sum_{I,J\in P_{(G,f)}}(-1)^*\widetilde{ev}_{(G,f,I,J)}(x_i, x_j,x_1,\cdots,\hat{x_i},\cdots,\hat{x_j},\cdots,x_m).\end{equation}
\begin{obs}
We note that we can re-express the summands \eqref{se} of \eqref{summe}, applying the rule \ref{Feynm}, as
\begin{align}\label{ia1}
(-1)^*\widetilde{ev}_{(G,f,I,J)}&(x_i,x_j,x_1,\cdots,\hat{x_i},\cdots,\hat{x_j},\cdots,x_m)\nonumber\\=&\gamma\epsilon(i,1,..,\hat{i},..,\hat{j},.., n,j)\widetilde{ev}_{\psi(G,f,I,J)}(x_i,x_1\cdots,\hat{x_i},\cdots,\hat{x_j},\cdots,x_m,x_j)\end{align}
if $\psi(G,f,I,J)\in B_{(k_1,k_2)}^{g,n}$. Here the right hand side is defined in the standard way by rule \ref{id1}.
The additional $\gamma$ factors in \eqref{ia1} arise from the fact that the appearing graphs on the right hand side have the same number of total edges, but one more vertex, thus decreasing the betti number by one as compared to the appearing graphs appearing on the left hand side (recalling definition \ref{betti}).

Similarly we have 
\begin{align}
&(-1)^*\widetilde{ev}_{(G,f,I,J)}(x_i, x_j,x_1,\cdots,\hat{x_i},\cdots,\hat{x_j},\cdots,x_m)\nonumber \\
=&\widetilde{ev}_{(G_1,f_1)}( \_,\cdots,\_)\cdot \widetilde{ev}_{(G_2,f_2)}(\_,\cdots,\_,)(-1)^\sigma(L_1\sqcup L_2)_*(x_i,x_j,x_1,\cdots,\hat{x_i},\cdots,\hat{x_j},\cdots,x_m) \label{ia2}
\end{align}
if $\psi(G,f,I,J)=:\Big((G_1,f_1),(G_2,f_2),L_1,L_2\Big)\in C_{k_1,k_2}^{g,n}$, recalling the notations from definition \ref{grdf}.
In \eqref{ia2} we plug the $x_l$'s belonging to $L_1$, that is such that $l\in L_1$ (among them $x_i$ at the first place) into the first factor and the $x_l$'s belonging to $L_2$ (among them $x_j$ at the last place) into the second factor, indicated by $(L_1\sqcup L_2)_*$.
The sign $(-1)^\sigma$ arises from the resulting Koszul sign of the permutation of the $x_l$'s. There is no additional $\gamma$ factors in \eqref{ia2} since on the right hand side we both increase the number of total vertices by one, but also the number of connected components from one to two, thus the total betti number stays the same.
\end{obs}
Next we note that the indexing set of the second and third sum in \eqref{E} is in bijection with the set $A_{k_1,k_2}^{g,n}$. Theorem \ref{gt} tells us that this set is in bijection with $B_{k_1,k_2}^{g,n}\cup C_{k_1,k_2}^{g,n}$ and equations \eqref{ia1} and \eqref{ia2} imply that the evaluation maps are compatible with this bijection. Thus we have  
\begin{align*}
E=&\sum_{g,n}\sum_{B^{g,n}_{k_1,k_2}}\epsilon(i,1,..,\hat{i},..,\hat{j},.., n,j)\widetilde{ev}_{(G,f)}(x_i,x_1\cdots,\hat{x_i},\cdots,\hat{x_j},\cdots,x_m,x_j)\\
+\sum_{g,n}&\sum_{C^{g,n}_{k_1,k_2}}\widetilde{ev}_{(G_1,f_1)}(\_, ,\cdots,\_)\cdot \widetilde{ev}_{(G_2,f_2)}(\_,,\cdots,\_)(-1)^\sigma(L_I\sqcup L_J)_*(x_i,x_j,x_1,\cdots,\hat{x_i},\cdots,\hat{x_j},\cdots,x_m).
\end{align*}
Finally using the identification \eqref{obv2} and the fact that the evaluation maps are compatible with that identification we rewrite the first summand
\begin{align*}
E=&\gamma\sum_{g,n}\sum_{\widetilde{B}^{g,n}_{k_1,k_2}}\widetilde{ev}_{(G,f)}(x_1,\cdots ,x_m)\\
+&\sum_{g,n}\sum_{\widetilde{C}^{g,n}_{k_1,k_2}}\widetilde{ev}_{(G_1,f_1)}(\_,\cdots,\_)\cdot \widetilde{ev}_{(G_2,f_2)}(\_, \cdots,\_)(-1)^{\widetilde\sigma}(L_1\sqcup L_2)_*(x_1,\cdots,x_m).
\end{align*}
and where we have rewritten the second summand using identification \eqref{obv} and the fact that the evaluation maps are compatible with that. Now using again \eqref{id1} we deduce that
\begin{align*}
E=\gamma\sum_{g,n}\sum_{ (G,f)\in \widetilde{\Gamma(g,n)}_m}
  \frac{1}{|Aut(G,f)|}\cdot& K_{(G,f)}(x_1,\cdots,x_m)+\\
\sum_{g,n}\sum_{\substack{n_1+n_2=n\\g_1+g_2=g\\
m_1+m_2=m}}\sum_{ \substack{G_1\in {\Gamma(g_1,n_1)}_{m_1}\\
G_2\in {\Gamma(g_2,n_2)}_{m_2}}}&\sum_{\substack{L_1\sqcup L_2=[m] \\  i\in L_1,j\in L_2}}
  \\
  \frac{1}{|Aut(G_1)|\cdot |Aut(G_2)|}&\big(\mathcal{K}_{|L_1|}(\_,\cdots,\_)\cdot\mathcal{K}_{|L_2|}(\_,\cdots,\_)\big)(-1)^{\widetilde{\sigma}}(L_1\sqcup L_2)_*(x_1,\cdots,x_m)
  \end{align*}
which by definition is equal to  
  $$=\gamma\mathcal{K}_m(x_1\odot\cdots \odot x_m)+\sum_{\substack{L_1\sqcup L_2=[m] \\  i\in L_1,j\in L_2}}\epsilon(L_1,L_2)\big(\mathcal{K}_{|L_1|}(\bigodot_{i\in L_1} x_i)\cdot\mathcal{K}_{|L_2|}(\bigodot_{i\in L_2} x_i)\big),$$
  which is what we wanted to show.
\end{proof}
\begin{lemma}\label{easy}
For all $m>1$ and $(x_1\otimes y_1)\odot\cdots \odot (x_m\otimes y_m)\in Sym^m(\mathcal{F}^{c}(\mathcal{C})\otimes W[6-2d])$ we have\footnote{Here $(-1)^{*_1}=(-1)^{|y_i|}$ and $(-1)^{*_2}=(-1)^{\sum_{i\in L_1}|y_i|}$} 

\begin{align*}
&\sum_{i\neq j}\sum_{\substack{L_1\sqcup L_2=[m]\\ i\in L_1,j\in L_2}}\big(\mathcal{K}_{|L_1|}(\bigodot_{i\in L_1} x_i)\cdot\mathcal{K}_{|L_2|}(\bigodot_{i\in L_2} x_i)\big)\otimes(-1)^{*_1}\big(\{y_i,y_j\}_w\cdot  y_1\cdot\ldots \hat{y_i}\ldots\hat{y_j}\ldots\cdot y_m\big)\\
&=\sum_{\substack{L_1\sqcup L_2=[m] \\ |L_1|,|L_2|\geq 1}}\big(\mathcal{K}_{|L_1|}(\bigodot_{i\in L_1} x_i)\cdot\mathcal{K}_{|L_2|}(\bigodot_{i\in L_2} x_i)\big)\otimes (-1)^{*_2}\{\bigodot_{i\in L_1} y_i,\bigodot_{j\in L_2} y_j\}_w
\end{align*}
\end{lemma}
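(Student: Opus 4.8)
The strategy is to recognize the asserted equality as nothing but the graded Leibniz expansion of a single shifted Poisson bracket, combined with an interchange of the order of summation. Fix $m>1$ and the element $(x_1\otimes y_1)\odot\cdots\odot(x_m\otimes y_m)$. For a partition $L_1\sqcup L_2=[m]$ with both blocks nonempty, the factor $\mathcal{K}_{|L_1|}(\bigodot_{i\in L_1}x_i)\cdot\mathcal{K}_{|L_2|}(\bigodot_{i\in L_2}x_i)$ sitting in the $\mathcal{F}^{c}(\mathcal{C})^{triv}$-slot is literally the same on both sides (for every $i\in L_1$ and $j\in L_2$ on the left), and it is never commuted past anything; since $\otimes$ is bilinear the claim is thus equivalent to the identity obtained after stripping this factor off, that is, to an identity taking place purely in $W$, to be checked one partition at a time. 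Furthermore, enumerating the set $\{(L_1,L_2,i,j)\ :\ L_1\sqcup L_2=[m],\ i\in L_1,\ j\in L_2\}$ by first choosing the partition and then the pair, or by first choosing an ordered pair $i\neq j$ in $[m]$ and then a compatible partition, gives exactly the two ways the outer sums are written (the constraint $i\neq j$ being automatic since $L_1\cap L_2=\emptyset$). After this bijection it remains to prove, for each fixed partition,
\[
(-1)^{*_2}\,\{\bigodot_{i\in L_1} y_i,\bigodot_{j\in L_2} y_j\}_w\ =\ \sum_{i\in L_1}\sum_{j\in L_2}(-1)^{*_1}\,\{y_i,y_j\}_w\cdot y_1\cdots\hat{y}_i\cdots\hat{y}_j\cdots y_m.
\]

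This is exactly the biderivation property of the bracket. By Definition \ref{BD} the shifted Poisson bracket $\{\_,\_\}_w$ is a Poisson bracket for the commutative multiplication $m_w$ of $W$, hence a graded derivation in each of its two slots. I would therefore expand $\{\bigodot_{i\in L_1} y_i,\bigodot_{j\in L_2} y_j\}_w$ by applying the Leibniz rule in the first slot, pulling out each $y_i$ with $i\in L_1$ in turn, and then in the second slot, pulling out each $y_j$ with $j\in L_2$; since a derivation turns a product into a sum of terms with exactly one factor differentiated, this produces precisely the double sum over $(i,j)\in L_1\times L_2$, the $(i,j)$-term being $\{y_i,y_j\}_w$ times the product of the $m-2$ surviving $y_\ell$'s. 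Permuting those surviving factors into their natural increasing order $y_1\cdots\hat{y}_i\cdots\hat{y}_j\cdots y_m$ brings each term into the shape on the right.

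The only remaining point is the matching of signs, which — consistently with the name of the lemma — is a bookkeeping matter rather than a genuine obstacle: one checks that, for each pair $(i,j)$, the Koszul signs produced by the graded Leibniz rule (involving the degree of the bracket, the shift by $[6-2d]$, and the degrees of the $y_\ell$ in the two blocks), together with the sign of the reordering of the surviving factors and the prefactor $(-1)^{*_2}=(-1)^{\sum_{i\in L_1}|y_i|}$, collapse to the prescribed prefactor $(-1)^{*_1}=(-1)^{|y_i|}$. Performing this computation with the Koszul sign rule and the conventions of the preliminary section completes the proof.
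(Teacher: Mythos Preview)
Your proposal is correct and follows essentially the same approach as the paper: the paper's proof consists of a single sentence invoking the Leibniz rule for $W$ to obtain the identity
\[
(-1)^{\sum_{i\in L_1}|y_i|}\{\textstyle\bigodot_{i\in L_1} y_i,\bigodot_{j\in L_2} y_j\}_w=\sum_{i\in L_1,\,j\in L_2}(-1)^{|y_i|}\{y_i,y_j\}_w\cdot y_1\cdots \hat{y}_i\cdots\hat{y}_j\cdots y_m,
\]
which is exactly the biderivation expansion you isolate, and then plugs it into the right-hand side. Your additional remarks about stripping the $x$-factor and interchanging the order of summation simply make explicit what the paper leaves implicit.
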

\begin{proof}
This follows directly from plugging the identity
$$(-1)^{\sum_{i\in L_1}|y_i|}\{\bigodot_{i\in L_1} y_i,\bigodot_{j\in L_2} y_j\}_w=\sum_{i\in L_1,j\in L_2}(-1)^{|y_i|}\{y_i,y_j\}_w\cdot  y_1\cdot\ldots \hat{y_i}\ldots\hat{y_j}\ldots\cdot y_m,$$
which follows from the Leibniz rule for $W$, into the second line, which gives the first line.
\end{proof}
Now we can easily give the proof of the key lemma \ref{key}:
\begin{proof}
For $m>1$ and $(x_1\otimes y_1)\odot\cdots \odot (x_m\otimes y_m)\in Sym^m(\mathcal{F}^{c}(\mathcal{C})\otimes W[6-2d])$ we have\footnote{Here the signs $a$, $b$, $c$ are as in the statement of lemma \ref{key}. Further $(-1)^d=(-1)^{\sum_{i\in L_1}|y_i|}$ times the Koszul sign of permuting the $x$'s to the left and $y$'s to right.} 
\begin{align*}
&\sum_{ i\neq j}(-1)^{*_a}\mathcal{K}_{m-1}\big((x_i\cdot x_j)\odot x_1\odot\ldots \hat{x_i}\ldots\hat{x_j}\ldots\odot x_m\big)\otimes\big(\{y_i,y_j\}_w\cdot  y_1\cdot\ldots \hat{y_i}\ldots\hat{y_j}\ldots\cdot y_m\big) \\
=&\sum_{i\neq j}(-1)^{*_b}\gamma\mathcal{K}_m(x_1\odot\cdots \odot x_m)\otimes\big(\{y_i,y_j\}_w\cdot  y_1\cdot\ldots \hat{y_i}\ldots\hat{y_j}\ldots\cdot y_m\big)\\
&+\sum_{i\neq j}\sum_{\substack{L_1\sqcup L_2=[m] \\  i\in L_1,j\in L_2}}(-1)^{*_d}\big(\mathcal{K}_{|L_1|}(\bigodot_{i\in L_1} x_i)\cdot\mathcal{K}_{|L_2|}(\bigodot_{i\in L_2} x_i)\big)\otimes\big(\{y_i,y_j\}_w\cdot  y_1\cdot\ldots \hat{y_i}\ldots\hat{y_j}\ldots\cdot y_m\big)\\
=&\sum_{i\neq j}(-1)^{*_b}\gamma\mathcal{K}_m(x_1\odot\cdots \odot x_m)\otimes\big(\{y_i,y_j\}_w\cdot  y_1\cdot\ldots \hat{y_i}\ldots\hat{y_j}\ldots\cdot y_m\big)\\
&+\sum_{\substack{L_1\sqcup L_2=[m] \\ |L_1|,|L_2|\geq 1}}(-1)^{*_c}\big(\mathcal{K}_{|L_1|}(\bigodot_{i\in L_1} x_i)\cdot\mathcal{K}_{|L_2|}(\bigodot_{i\in L_2} x_i)\big)\otimes \{\bigodot_{i\in L_1} y_i,\bigodot_{j\in L_2} y_j\}_w
\end{align*}
Here the first equality follows from theorem \ref{BVinf} (to be more precise from multiplying the equality stated by $(-1)^{|y_i|}$ times the Koszul sign from permuting in $(x_1\otimes y_1)\odot\cdots \odot (x_m\otimes y_m)$ the $x$'s to the left and $y$'s to right ). The second equality follows from lemma \ref{easy} (again multiplied by that same Koszul sign).
\end{proof}  
With this we have now proven theorem \ref{cwd}; let us quickly recall why:
\begin{Zusa}\label{Zusa}
As explained (beneath its statement) proving theorem \ref{cwd} is equivalent to proving equation \ref{inter}. By equations \ref{BDr} and \ref{KiL} proving equation  \ref{inter} is equivalent to proving lemma \ref{key}. Finally theorem \ref{BVinf} and the easy lemma \ref{easy} together give a proof of lemma \ref{key}, as we just explained above.
\end{Zusa}
A direct corollary is:
\begin{corollary}\label{ec}
Given a dimension $d$ cyclic $A_\infty$-category $\mathcal{C}$, a splitting $s$ of the non-commutative Hodge filtration and $\Lambda,$ a full subcategory of $\mathcal{C}$. Then there is an $L_\infty$ quasi-isomorphism
$$\mathcal{K}_s\otimes m:\ \mathcal{F}^{c}(\mathcal{C})\otimes \mathcal{F}^{o}(\Lambda)[5-2d]\rightsquigarrow \mathcal{F}^{c}(\mathcal{C})^{triv}\otimes \mathcal{F}^{o}(\Lambda)[5-2d].$$
\end{corollary}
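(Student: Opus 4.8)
The plan is to obtain Corollary \ref{ec} as the special case $W=\mathcal{F}^{o}(\Lambda)$ of Theorem \ref{cwd}. First I would recall from Definition \ref{oBD} that, for a full subcategory $\Lambda$ of $\mathcal{C}$, the associated collection of cyclic chain complexes (Definition \ref{ColV}) produces $\mathcal{F}^{o}(\Lambda):=\mathcal{F}^{o}(V_\Lambda)$, which is a $(2d-5)$-twisted Beilinson--Drinfeld algebra over $k\llbracket\gamma\rrbracket$ with $|\gamma|=6-2d$ --- matching both the twist and the degree of the formal parameter of $\mathcal{F}^{c}(\mathcal{C})$ (Definition \ref{cBD}). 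Hence $\mathcal{F}^{o}(\Lambda)$ is an admissible choice for the auxiliary BD algebra $W$ fixed at the beginning of Section 5.

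With $W=\mathcal{F}^{o}(\Lambda)$, Lemma \ref{tensorBD} gives the tensor-product $(2d-5)$-twisted BD algebras $\mathcal{F}^{c}(\mathcal{C})\otimes\mathcal{F}^{o}(\Lambda)$ and $\mathcal{F}^{c}(\mathcal{C})^{triv}\otimes\mathcal{F}^{o}(\Lambda)$; forgetting multiplications and applying the shift $[5-2d]$ yields the two dg Lie algebras in the statement. Theorem \ref{cwd} then provides, for this very $W$, the coalgebra map $C(\mathcal{K}_s\otimes m)$ intertwining the dg-Lie differentials, i.e.\ an $L_\infty$-morphism $\mathcal{K}_s\otimes m$ between them. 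Its first Taylor coefficient is the tensor product of $\mathcal{K}_s^{1}$ --- the first Taylor coefficient of the purely closed morphism \eqref{inLqi}, which is a quasi-isomorphism --- with the identity on $\mathcal{F}^{o}(\Lambda)$, so it is a quasi-isomorphism; thus $\mathcal{K}_s\otimes m$ is an $L_\infty$ quasi-isomorphism, as claimed.

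Since the substantive content --- the identity \eqref{inter}, which reduces via \eqref{BDr} and \eqref{KiL} to Lemma \ref{key}, which in turn follows from the graph-combinatorial Theorem \ref{BVinf} (through the bijection of Theorem \ref{gt}) together with the elementary Lemma \ref{easy} --- has already been proven for an arbitrary $(2d-5)$-twisted BD algebra $W$, nothing remains beyond unwinding definitions. The one point that deserves an explicit check is the bookkeeping: that the twist $2d-5$ and the degree $6-2d$ of the formal variable of $\mathcal{F}^{o}(\Lambda)$ agree with those of $\mathcal{F}^{c}(\mathcal{C})$, which is exactly what the shifts built into Definitions \ref{cBD} and \ref{oBD} are designed to ensure; once this is noted, I do not expect any further obstacle.
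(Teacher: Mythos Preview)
Your proposal is correct and matches the paper's own proof exactly: the paper simply applies Theorem~\ref{cwd} with $W=\mathcal{F}^{o}(\Lambda)$, noting that $\mathcal{F}^{o}(\Lambda)$ is a $(2d-5)$-twisted BD algebra by Definition~\ref{oBD}. Your additional remarks on the matching of twists and the quasi-isomorphism of the first Taylor coefficient are accurate elaborations of points the paper leaves implicit.
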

\begin{proof}
    We apply theorem \ref{cwd} to the $(2d-5)$-twisted BD algebra $\mathcal{F}^{o}(\Lambda)$ from definition \ref{oBD}.
\end{proof}
Given a dimension $d$ cyclic $A_\infty$-category $\mathcal{C}$ and a splitting $s$ there is an isomorphism of induced dg Lie algebras (recalling definitions \ref{cBD_t} and \ref{cBD_Tr})
\begin{equation}\label{R}
R:\mathcal{F}^{c}(\mathcal{C})^{triv}\rightarrow \mathcal{F}^{c}(\mathcal{C})^{Triv},\end{equation}
see beginning of section 9.1 of \cite{CaTu24}, keeping in mind that we denoted eg. $\mathfrak{h}_{\mathcal{C}}^{triv}=\mathcal{F}^{c}(\mathcal{C})^{triv}.$ Thus it follows:
\begin{corollary}\label{maco}
Given a dimension $d$ cyclic $A_\infty$-category $\mathcal{C}$, a splitting $s$ of the non-commutative Hodge filtration and $\Lambda,$ a full subcategory of $\mathcal{C}$. Then there is an $L_\infty$ quasi-isomorphism
$$\Psi_s^{oc}:\ \mathcal{F}^{c}(\mathcal{C})\otimes \mathcal{F}^{o}(\Lambda)[5-2d]\rightsquigarrow \mathcal{F}^{c}(\mathcal{C})^{Triv}\otimes \mathcal{F}^{o}(\Lambda)[5-2d],$$
given by composing the isomorphism \eqref{R} with the $L_\infty$-map from corollary \ref{ec}.
\end{corollary}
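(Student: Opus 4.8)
The plan is to obtain $\Psi_s^{oc}$ simply as the composite of the open--closed trivialization $\mathcal{K}_s\otimes m$ of Corollary \ref{ec} with the isomorphism $R$ of \eqref{R} (extended by the identity on $\mathcal{F}^o(\Lambda)$), all the real work having already been carried out in Theorems \ref{BVinf} and \ref{cwd}. So the proof amounts to checking that this composition is well defined as a morphism of $L_\infty$-algebras and that it is again a quasi-isomorphism.

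\textbf{Step 1.} I would first observe that $R$, although recorded in \eqref{R} only as an isomorphism of induced dg Lie algebras, is compatible with the underlying graded commutative $k\llbracket\gamma\rrbracket$-algebra structures as well: by its construction in \cite{CaTu24}, \S9.1 it is the multiplicative extension to $Sym(\cdots)\llbracket\gamma\rrbracket$ of the $u$-linear automorphism of Hochschild chains attached to $s$ (the one with leading coefficient the identity, as after Definition \ref{splitting}), and $\mathcal{F}^{c}(\mathcal{C})^{triv}$ and $\mathcal{F}^{c}(\mathcal{C})^{Triv}$ carry the same underlying graded algebra. Consequently $R\otimes\mathrm{id}_{\mathcal{F}^o(\Lambda)}$ respects the tensor bracket \eqref{tenbra}: since the closed--sector bracket vanishes in both the $triv$ and the $Triv$ models, \eqref{tenbra} pairs only the common multiplication of the closed factor with the bracket of $\mathcal{F}^o(\Lambda)$, both of which $R\otimes\mathrm{id}$ preserves; and it commutes with the differentials because $R$ intertwines $d_{Hoch}+uB$ with $d_{Hoch}$ while acting as the identity on the differential of $\mathcal{F}^o(\Lambda)$. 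Hence (Lemma \ref{tensorBD}) $R\otimes\mathrm{id}_{\mathcal{F}^o(\Lambda)}$ is an isomorphism of $(2d-5)$-twisted BD algebras, in particular, by Example \ref{dgla}, a strict $L_\infty$-isomorphism $\mathcal{F}^{c}(\mathcal{C})^{triv}\otimes\mathcal{F}^{o}(\Lambda)[5-2d]\to\mathcal{F}^{c}(\mathcal{C})^{Triv}\otimes\mathcal{F}^{o}(\Lambda)[5-2d]$.

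\textbf{Step 2.} I would then define $\Psi_s^{oc}:=(R\otimes\mathrm{id}_{\mathcal{F}^o(\Lambda)})\circ(\mathcal{K}_s\otimes m)$, invoking that a composition of $L_\infty$-morphisms is an $L_\infty$-morphism (compose the associated dg coalgebra maps of Definition \ref{coalgebra}). On first Taylor coefficients the composite equals $(R\otimes\mathrm{id})\circ(\mathcal{K}_s^1\otimes\mathrm{id})$, which is the composition of a quasi-isomorphism (Corollary \ref{ec}, inheriting it from \eqref{inLqi}) with an isomorphism and therefore a quasi-isomorphism; so $\Psi_s^{oc}$ is an $L_\infty$ quasi-isomorphism with the stated source and target. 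That the target coincides with the one in Theorem A is a matter of unwinding the definition of the tensor BD algebra in Lemma \ref{tensorBD}.

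\textbf{Main obstacle.} Honestly I expect none of substance: this corollary is a formal consequence of Corollary \ref{ec}, which in turn rests on the graph identity of Theorem \ref{BVinf}. The only place meriting a written argument is the claim in Step 1 that an isomorphism of the \emph{induced} dg Lie algebras in \eqref{R} is already enough to conclude that $R\otimes\mathrm{id}$ is a dg Lie isomorphism on the tensor products --- which works precisely because the closed bracket is zero in both models, so only the (multiplicative, hence $R$-equivariant) structure of the closed factor feeds into \eqref{tenbra}.
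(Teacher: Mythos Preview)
Your proposal is correct and follows the same approach as the paper: the corollary is stated there as an immediate consequence of composing $R$ (tensored with the identity) with $\mathcal{K}_s\otimes m$, with no further argument given. You in fact supply more detail than the paper does, correctly isolating the one point that deserves a word --- namely that $R\otimes\mathrm{id}$ is a dg Lie isomorphism on the tensor products because the closed-sector bracket vanishes in both the $triv$ and $Triv$ models, so only the multiplicativity of $R$ enters via \eqref{tenbra}.
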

Thus we have proven theorem A from the introduction.
\bibliography{refs}
\bibliographystyle{alpha}
\end{document}